\theoremstyle{plain}
\newtheorem{lemma}{Lemma}[section]
\newtheorem{theorem}[lemma]{Theorem}
\newtheorem{proposition}[lemma]{Proposition}
\newtheorem{corollary}[lemma]{Corollary}
\theoremstyle{definition}
\newtheorem{definition}[lemma]{Definition}
\numberwithin{equation}{section}
\newcommand{\dom}{\textrm{Dom\,}}
\newcommand{\R}{\mathbb{R}}
\newcommand{\N}{\mathbb{N}}
\newcommand{\supp}{\text{\rm supp}}
\newcommand{\Lip}{\mathrm{Lip}}
\newcommand{\diam}{\rm{diam\,}}
\newcommand{\ve}{\varepsilon}
\newcommand{\erre}{\mathbb{R}}
\newcommand{\cI}{\mathcal{I}}
\renewcommand{\r}{\varrho}
\renewcommand{\L}{\mathcal{L}}
\newcommand{\RCD}{\mathsf{RCD}}
\newcommand{\CD}{\mathsf{CD}}
\newcommand{\Geo}{{\rm Geo}}
\newcommand{\mm}{\mathfrak m}
\newcommand{\qq}{\mathfrak q}
\newcommand{\sfd}{\mathsf d}
\newcommand{\cP}{\mathcal P}
\newcommand{\PP}{\mathsf{P}}
\newcommand{\Opt}{\mathrm{OptGeo}}
\begin{document}

\title[Isoperimetric inequalities for finite perimeter sets in m.m.s.] {Isoperimetric inequalities for finite perimeter sets \\ under lower Ricci curvature bounds}
\author{Fabio Cavalletti}  \address{Dipartimento di Matematica,  Universit\'a degli Studi di Pavia, Italy}
\email{fabio.cavalletti@unipv.it}
\author{Andrea Mondino}  \address{Mathematics Institute, The University of Warwick, United Kingdom}
\email{A.Mondino@warwick.ac.uk}
%

%

\bibliographystyle{plain}

\begin{abstract}
We prove that the results regarding the Isoperimetric inequality and Cheeger constant formulated in terms of the \emph{Minkowski content}, 
obtained by the authors in previous papers \cite{CM1,CM2} in the framework of essentially non-branching 
metric measure spaces verifying the local curvature dimension condition, 
also hold in the stronger formulation in terms of the \emph{perimeter}. 
\end{abstract}

\maketitle


\section{Introduction}

In the recent paper \cite{CM1} the authors proved the following sharp isoperimetric inequality: 
\begin{equation}\label{E:isouter}
\mm^{+}(E) \geq \mathcal{I}_{K,N,D}(\mm(E)),
\end{equation}
where $E \subset X$ is any Borel set, $(X,\sfd,\mm)$ is a metric measure space of diameter less than $D$
verifying the local curvature dimension condition with parameters $K$ and $N$, it is moreover essentially non-branching 
and finally $\mm$ is a probability measure. 
On the right-hand side of \eqref{E:isouter} there is $\mathcal{I}_{K,N,D}$, the sharp model isoperimetric profile function associated to $K,N,D$, 
see Section \ref{SS:IKND} for details. 
On the left hand side, $\mm^+(E)$ denotes the outer Minkowski content  of $E$: 
\begin{eqnarray}
\mm^+(E)&:=& \liminf_{\ve\downarrow 0} \frac{\mm(E^\ve)-\mm(E)}{\ve},   \label{eq:defm+}
\end{eqnarray}
where  $E^\ve:=\{x\in X\,:\,   \sfd(E, x)\leq \ve \}$. Outer Minkowski content gives a measurement of the size of $\partial E$. 
It is anyway a less accurate measurement than the one given by the perimeter.

In the Euclidean space, sets of finite perimeter are those subsets whose characteristic functions have finite total variation, in the  $BV$-sense.
If this is the case, the total variation of the distributional derivative of the characteristic function is a positive finite measure called perimeter measure; 
the perimeter of the set is the total mass of the perimeter measure.
Sets of finite perimeter can also be defined via relaxation in the following equivalent form:
given a Borel subset $E \subset \R^{n}$ and $A$ open, the perimeter of $E$ in $A$, $\mathsf{P}(E,A)$, is defined as follows
\begin{eqnarray}\label{E:perimeter-def}
\PP(E,A)&:=& \inf\left\{\liminf_{n\to \infty} \int_A |\nabla u_n| \,dx \,:\,  u_n \in \Lip(A), \, u_n\to \chi_E \text{ in } L^1_{loc}(A)\right\}  \label{eq:defP}, 
\end{eqnarray}
where  $\chi_E$ is the characteristic  function of $E$; accordingly $E \subset \R^{n}$ has finite perimeter in $\R^{n}$ if and only if $\PP(E,\R^{n}) < \infty$. 
Here $\Lip(A)$ denotes the space of real valued Lipschitz functions defined over $A$. 

Looking at \eqref{E:perimeter-def}, one finds out that all the objects used to define $\PP(E,A)$ 
have a clear generalization when we substitute the Euclidian space with  any metric space and the Lebesgue measure 
with any Borel measure. The theory of sets with finite perimeter in metric spaces has been developed in great generality in \cite{Am1, Am2}, 
to which we refer for a deeper insight. \\
The perimeter function $\PP(E,\cdot)$, both in the Euclidean and in the metric framework,
enjoys many nice properties: it is the restriction to open sets of a Borel measure having support in the essential boundary of $E$, $\partial^{*} E$; 
it is absolutely continuous with respect to the Hausdorff measure of codimension 1 restricted on $\partial^{*}E$ 
with density bounded from below and from above; it is used in the coarea formula, etc.. 
It is therefore more natural to look for inequalities involving $\PP(E)$ rather than $\mm^{+}(E)$. 
Moreover already from their definition one observes that $\PP(E) \leq \mm^{+} (E)$: indeed
in the definition of $\mm^{+}$ only the uniform approximation of $E$ is considered while in $\PP(E)$  any Lipschitz approximation  is allowed. 

It appears then as a natural question whether or not \eqref{E:isouter} holds true replacing $\mm^{+}(E)$ with $\PP(E)$.
The scope of this note is to answer affirmatively to  this question: we generalise the results proved in \cite{CM1} to the perimeter case.

\begin{theorem}\label{T:iso}
Let $(X,\sfd,\mm)$ be a metric measure space with $\mm(X)=1$, 
verifying  the essentially non-branching property and $\CD_{loc}(K,N)$ for some $K\in \R,N \in [1,\infty)$.
Let $D$ be the diameter of $X$, possibly assuming the value $\infty$.
\medskip

Then for every Borel subset $E\subset X$, calling $\mm(E)=v\in [0,1]$, it holds
$$
\PP(E) \ \geq \ \cI_{K,N,D}(v).
$$

\end{theorem}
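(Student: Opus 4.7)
The strategy is to replay the needle-decomposition argument from \cite{CM1} with the perimeter $\PP$ in place of the Minkowski content $\mm^+$. Recall that the proof in \cite{CM1} uses an $L^1$-optimal-transport localization to disintegrate $\mm$ along one-dimensional transport rays $(X_q, \mm_q)_{q \in Q}$, each carrying a 1D $\CD(K,N)$ structure, and reduces the multidimensional inequality for $\mm^+$ to a 1D isoperimetric inequality on each needle, combined via an averaging argument over $Q$. The plan is to perform the same reduction with $\PP$; the only genuinely new ingredient required is a slicing-type inequality for the perimeter.

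The first step is the slicing inequality
\[
\PP(E) \;\geq\; \int_Q \PP_q(E \cap X_q)\, d\qq(q),
\]
where $\PP_q$ denotes the perimeter in the 1D m.m.s.\ $(X_q, \sfd|_{X_q}, \mm_q)$. By the relaxation definition of $\PP$, I would pick Lipschitz $u_n : X \to [0,1]$ with $u_n \to \chi_E$ in $L^1(\mm)$ and $\int_X |\nabla u_n|\, d\mm \to \PP(E)$. Fubini applied to the disintegration gives, for $\qq$-a.e.\ $q$ and along a subsequence, $u_n|_{X_q} \to \chi_{E \cap X_q}$ in $L^1(\mm_q)$. Since each ray $X_q$ is a metric subspace of $X$, the slope of the restriction is dominated by the ambient slope, so
\[
\int_X |\nabla u_n|\, d\mm \;\geq\; \int_Q \int_{X_q} |\nabla(u_n|_{X_q})|\, d\mm_q\, d\qq(q).
\]
Passing to the liminf in $n$ and applying Fatou together with the 1D relaxation definition of $\PP_q$ on each ray yields the slicing inequality.

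The second step is the 1D isoperimetric inequality on each needle. By the $\CD_{loc}(K,N)$ analysis developed in \cite{CM1}, the measure $\mm_q$ admits a continuous density with respect to the 1-dimensional Hausdorff measure on $X_q$; for such 1D densities the perimeter of a Borel set of finite perimeter coincides with its Minkowski content. Hence the 1D inequality for $\PP_q$ is precisely the 1D inequality for $\mm_q^+$ already proved in \cite{CM1}. Combined with the averaging argument of \cite{CM1}, which exploits the mass-balance $\int_Q \mm_q(E \cap X_q)\, d\qq(q) = \mm(E)$ together with the structural properties of $\cI_{K,N,D}$, this gives
\[
\int_Q \PP_q(E \cap X_q)\, d\qq(q) \;\geq\; \cI_{K,N,D}(\mm(E)),
\]
which paired with the slicing inequality proves the theorem.

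The main obstacle is the slicing inequality. The delicate points are the pointwise domination of the intrinsic 1D slope of $u_n|_{X_q}$ by the ambient slope $|\nabla u_n|$, and the $\qq$-a.e.\ $L^1(\mm_q)$ convergence of the restrictions $u_n|_{X_q}$ to $\chi_{E \cap X_q}$. Both rest on the regularity of the needle decomposition, namely that the rays are unit-speed geodesics of $(X,\sfd)$ and that $\mm_q$ is absolutely continuous with a sufficiently regular density along $X_q$, which are exactly the properties guaranteed by the localization framework of \cite{CM1}.
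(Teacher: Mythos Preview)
Your overall strategy is the paper's own: pick an optimal Lipschitz sequence for $\PP(E)$, disintegrate $\mm$ along the rays of the $L^1$-localization from \cite{CM1}, dominate the intrinsic 1D slope on each geodesic ray by the ambient slope, apply Fatou to obtain a slicing inequality, then use the one-dimensional isoperimetric inequality on each needle and integrate over $Q$. The slicing step and the slope bound are exactly as in the paper's proof.

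There is one point where your description is either a real gap or a serious misreading. You say the final integration ``exploits the mass-balance $\int_Q \mm_q(E\cap X_q)\,d\qq(q)=\mm(E)$ together with the structural properties of $\cI_{K,N,D}$''. Taken literally this fails: from $\PP_q(E\cap X_q)\geq \cI_{K,N,D}(\mm_q(E\cap X_q))$ and only the \emph{integral} identity you would need
\[
\int_Q \cI_{K,N,D}\big(\mm_q(E\cap X_q)\big)\,d\qq(q)\;\geq\;\cI_{K,N,D}\!\left(\int_Q \mm_q(E\cap X_q)\,d\qq(q)\right),
\]
which is Jensen in the wrong direction since $\cI_{K,N,D}$ is concave. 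What actually closes the argument, both in \cite{CM1} and in the paper, is that the localization is run with the specific zero-mean function $f=\chi_E-v$. The third clause of Theorem~\ref{T:localize} then forces $\mm_q(E\cap X_q)=v$ for $\qq$-a.e.\ $q$, \emph{ray by ray}; hence $\PP_q(E\cap X_q)\geq\cI_{K,N,D}(v)$ directly on each needle, and integrating over the probability space $Q$ gives the claim with no convexity needed. You should make this explicit rather than gesture at ``structural properties''.

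A smaller imprecision: the claim that in 1D ``the perimeter of a finite-perimeter set coincides with its Minkowski content'' is stronger than what is proved or needed. What the paper establishes (Proposition~\ref{prop:P1D} and Corollary~\ref{C:IKND}) is a representation formula for the 1D perimeter and the equality of the two \emph{infima} over sets of prescribed mass, which then dominates $\cI_{K,N,D}(v)$. That is all you need, but the set-by-set identity can fail for infinite unions of intervals.
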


The proof of Theorem \ref{T:iso} follows the same scheme of the proof of \eqref{E:isouter} contained in \cite{CM1}. 
There, the analysis on the one dimensional version of \eqref{E:isouter} permitted to obtain the general \eqref{E:isouter} 
via a one-dimensional localization argument based on an $L^{1}$-Optimal Transportation problem (see also \cite{klartag}). 
Here again we first prove Theorem \ref{T:iso} in the easier one-dimensional framework and then we obtain the general case via localization. 
The one dimensional analysis is contained in Section \ref{S:onedimensional} while Section \ref{S:main} 
contains the proofs of Theorem \ref{T:iso}, of the Cheeger isoperimetric inequality (Theorem \ref{T:cheeger})
and of the corresponding almost rigidity result (Corollary \ref{C:almostcheeger}).

We also include below the statements of rigidity, the almost maximal diameter and the almost rigidity results one can obtain by replacing 
the outer Minkowski content with the perimeter. 
\medskip

Repeating verbatim the proof of  \cite[Theorem 1.4]{CM1} we obtain the rigidity for the perimeter. 
We set
$$
\cI_{(X,\sfd,\mm)}(v):=\inf\{\PP(E)\,: \, E \subset X, \, \mm(E)=v\} .
$$

\begin{theorem}\label{thm:Rigidity}
Let $(X,\sfd,\mm)$ be an  $\RCD^*(N-1,N)$ space for some  $N \in [2,\infty)$, with  $\mm(X)=1$. 
Assume that there exists $\bar{v} \in (0,1)$ such that $\cI_{(X,\sfd,\mm)}(\bar{v})=\cI_{N-1,N,\infty}(\bar{v})$. 
\medskip

Then $(X,\sfd,\mm)$ is a spherical suspension:  there exists
an $\RCD^*(N-2,N-1)$ space $(Y,\sfd_{Y}, \mm_{Y})$ with $\mm_{Y}(Y)=1$ such that  $X$ is isomorphic as metric measure space to $[0,\pi] \times^{N-1}_{\sin} Y$.

Moreover, in this case, the following hold:
\begin{itemize}
\item[$i)$]  For every $v\in [0,1]$ it holds  $\cI_{(X,\sfd,\mm)}(v)=\cI_{N-1,N,\infty}(v)$. 
\item[$ii)$] For every $v\in [0,1]$ there exists a Borel subset $A \subset X$ with $\mm(A)=v$ such that 
$$
\PP(A)=\cI_{(X,\sfd,\mm)}(v)=\cI_{N-1,N,\infty}(v).
$$
\item[$iii)$]  If $\mm(A)\in (0,1)$ then  $\PP(A)=\cI_{(X,\sfd,\mm)}(v)=\cI_{N-1,N,\infty}(v)$ if and only if
$$
\min\Big(\mm(A\setminus \{(t,y)\in [0,\pi] \times^{N-1}_{\sin} Y \, :\, t \in [0,r_v] \}), \mm(A\setminus\{(t,y)\in [0,\pi] \times^{N-1}_{\sin} Y \, :\, t \in [\pi-r_v, \pi] \}) \Big)=0,
$$
where  $r_v\in (0,\pi)$ is chosen so 
that $\int_{[0,r_v]} c_N (\sin(t))^{N-1} dt =v$, $c_N$ being given by $c_N^{-1}:= \int_{[0,\pi]}  (\sin(t))^{N-1} dt$.
\end{itemize}
\end{theorem}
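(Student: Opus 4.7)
The plan is to transcribe the proof of \cite[Theorem 1.4]{CM1} replacing the outer Minkowski content $\mm^{+}$ by the perimeter $\PP$ throughout. This replacement is legitimate because Theorem \ref{T:iso} supplies the sharp perimeter lower bound on arbitrary $\RCD^*(N-1,N)$ spaces, and the one-dimensional analysis of Section \ref{S:onedimensional} yields, in particular, the equality case of the 1D perimeter isoperimetric inequality on $\CD(N-1,N)$ needles.

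Concretely, I would fix a Borel set $A \subset X$ with $\mm(A) = \bar v$ realising the infimum $\PP(A) = \cI_{N-1,N,\infty}(\bar v)$; existence follows by the lower semicontinuity of $\PP$ and the precompactness of $\RCD^*(N-1,N)$ probability spaces. The proof of Theorem \ref{T:iso} in Section \ref{S:main} then furnishes the $L^{1}$-optimal transportation localisation associated with the signed distance function from $A$: $\mm$ is disintegrated as $\mm = \int_Q \mm_q\, d\qq(q)$, where the conditional probabilities $\mm_q$ are supported on transport rays (needles) $X_q$ and each needle is a one-dimensional $\CD(N-1,N)$ space. The same localisation delivers the integral inequality $\PP(A) \geq \int_Q \PP_{(X_q,\sfd,\mm_q)}(A \cap X_q)\, d\qq(q)$, which, combined with the sharp 1D perimeter inequality on each needle, forces equality for $\qq$-a.e.\ $q$. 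The one-dimensional rigidity from Section \ref{S:onedimensional} then yields, for $\qq$-a.e.\ $q$: the needle $X_q$ has length exactly $\pi$, its density is a multiple of $(\sin t)^{N-1}$, and $A \cap X_q$ coincides $\mm_q$-a.e.\ with an initial or terminal interval of mass $\bar v$.

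The uniform needle length forces $\diam(X) = \pi$, so Ketterer's maximal diameter theorem realises $X$ as a spherical suspension $[0,\pi] \times^{N-1}_{\sin} Y$ over an $\RCD^*(N-2,N-1)$ space $Y$, which is the main claim. From this structure items $(i)$ and $(ii)$ follow immediately: the isoperimetric profile of such a suspension is $\cI_{N-1,N,\infty}$, and the slices $\{t \in [0,r_v]\}$ are explicit minimisers for every $v$. Item $(iii)$ is obtained by reapplying the same disintegration argument to an arbitrary volume-$v$ minimiser and invoking the needle-wise rigidity to conclude that, up to $\mm$-null sets, $A$ must coincide with one of the two polar caps described in the statement.

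Since the whole argument is structurally identical to the Minkowski content case, no genuinely new geometric idea is needed. The one step specific to the perimeter version, and in my view the main technical obstacle, is the integral inequality $\int_Q \PP_{\mm_q}(A \cap X_q)\, d\qq(q) \leq \PP(A)$: for $\mm^{+}$ this is essentially a consequence of the triangle inequality, whereas for $\PP$ it requires the more subtle localisation argument carried out in the proof of Theorem \ref{T:iso}. Once this is in hand, the rigidity analysis is, as the authors note, a verbatim transcription of \cite{CM1}.
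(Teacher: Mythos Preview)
Your proposal is correct and follows the same route as the paper, whose ``proof'' is literally the single sentence ``Repeating verbatim the proof of \cite[Theorem 1.4]{CM1} we obtain the rigidity for the perimeter.'' You have simply fleshed out what that verbatim transcription amounts to, and done so accurately: localise via Theorem~\ref{T:localize}, use the perimeter--needle inequality from the proof of Theorem~\ref{T:iso}, force equality on $\qq$-a.e.\ ray, invoke the one-dimensional rigidity to get rays of length $\pi$, and conclude by Ketterer's maximal diameter theorem; items $(i)$--$(iii)$ then follow exactly as you describe.

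One small remark. Your opening step --- selecting an actual minimiser $A$ by lower semicontinuity of $\PP$ and $BV$-compactness --- is a simplification genuinely available only in the perimeter setting (Minkowski content is not l.s.c.), so the argument in \cite{CM1} cannot literally begin this way. There the spherical-suspension conclusion is reached more directly from the chain $\cI_{(X,\sfd,\mm)}(\bar v)\geq \cI_{N-1,N,\diam X}(\bar v)$ (Theorem~\ref{T:iso}) together with the strict monotonicity of $D\mapsto \cI_{N-1,N,D}(\bar v)$ on $(0,\pi]$, which already forces $\diam X=\pi$ without choosing a minimiser. Either route lands on Ketterer's theorem, and for $(iii)$ one needs the needle-wise equality analysis you outline in any case, so the difference is cosmetic.
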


Repeating verbatim the proof of \cite[Theorem 1.5]{CM1} we get  
\begin{theorem}[Almost equality in L\'evy-Gromov implies almost maximal diameter]\label{thm:AlmRig}
For every  $N>1$, $v \in (0,1)$, $\ve>0$ there exists $\bar{\delta}=\bar{\delta}(N,v,\ve)>0$ such that the following holds. 
For every $\delta\in [0, \bar{\delta}]$,  if $(X,\sfd,\mm)$ is an $\RCD^*(N-1-\delta,N+\delta)$ space satisfying 
$$
\cI_{(X,\sfd,\mm)}(v)\leq \cI_{N-1,N,\infty}(v)+\delta, 
$$
Then $\diam((X,\sfd)) \geq \pi-\ve$.
\end{theorem}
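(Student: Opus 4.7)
The plan is to argue by contradiction using a compactness-stability-rigidity triad, in the spirit of standard almost-rigidity results. Suppose the statement fails: then there exist $N>1$, $v\in(0,1)$, $\ve>0$, a sequence $\delta_n\downarrow 0$, and $\RCD^*(N-1-\delta_n,N+\delta_n)$ spaces $(X_n,\sfd_n,\mm_n)$ with $\mm_n(X_n)=1$ such that
\[
\cI_{(X_n,\sfd_n,\mm_n)}(v) \ \leq\ \cI_{N-1,N,\infty}(v)+\delta_n, \qquad \diam(X_n)\ <\ \pi-\ve.
\]
The first step is to invoke Gromov precompactness for $\RCD^*$ spaces with a two-sided curvature-dimension bound and uniformly bounded diameter: up to subsequences, $(X_n,\sfd_n,\mm_n)$ converges in the measured Gromov-Hausdorff sense to some compact metric measure space $(X_\infty,\sfd_\infty,\mm_\infty)$, which by the stability of the curvature-dimension condition under mGH-convergence is an $\RCD^*(N-1,N)$ space with $\mm_\infty(X_\infty)=1$ and $\diam(X_\infty)\leq \pi-\ve<\pi$.

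The second and most delicate step is to pass the isoperimetric hypothesis to the limit. For each $n$ we fix an almost-optimal Borel set $E_n\subset X_n$ with $\mm_n(E_n)=v$ and $\PP(E_n)\leq \cI_{N-1,N,\infty}(v)+2\delta_n$. Using the mGH-convergence one extracts a limit Borel set $E_\infty\subset X_\infty$ with $\mm_\infty(E_\infty)=v$. The crucial technical input is lower semicontinuity of the perimeter functional along mGH-converging sequences in the $\RCD^*$ class; this is where the present paper diverges from \cite{CM1} (there it was done for the Minkowski content, which is easier since the latter dominates the perimeter). Granted this semicontinuity, one obtains
\[
\cI_{(X_\infty,\sfd_\infty,\mm_\infty)}(v)\ \leq\ \PP(E_\infty)\ \leq\ \liminf_n \PP(E_n)\ \leq\ \cI_{N-1,N,\infty}(v).
\]
On the other hand, Theorem \ref{T:iso} applied in $X_\infty$ gives the reverse inequality $\cI_{(X_\infty,\sfd_\infty,\mm_\infty)}(v)\geq \cI_{N-1,N,\infty}(v)$, so equality holds.

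The final step is to invoke the rigidity statement, Theorem \ref{thm:Rigidity}, which forces $(X_\infty,\sfd_\infty,\mm_\infty)$ to be a spherical suspension $[0,\pi]\times^{N-1}_{\sin} Y$; in particular $\diam(X_\infty)=\pi$, contradicting the bound $\diam(X_\infty)\leq \pi-\ve$ obtained above. This contradiction proves the theorem.

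The main obstacle is the semicontinuity of $\PP$ along mGH-convergence, because the perimeter is defined via a relaxation procedure in each ambient space separately and so behaves less transparently than $\mm^+$ under mGH-convergence. One reasonable way to handle it is to use the comparison $\PP\leq \mm^+$ to transfer the almost-optimality to Minkowski-content approximants (e.g. tubular neighborhoods of $E_n$ obtained from suitable mollifications), then apply the Minkowski-content semicontinuity already exploited in \cite{CM1}; alternatively one can invoke the direct $\Gamma$-liminf semicontinuity of the $\BV$-energy under mGH-convergence developed in the metric $\BV$ theory of \cite{Am1,Am2}. Either route slots into the otherwise verbatim copy of the argument of \cite[Theorem 1.5]{CM1}.
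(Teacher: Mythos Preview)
Your contradiction-via-compactness scheme is \emph{not} the route taken in the paper (nor in \cite[Theorem~1.5]{CM1}, which the paper follows verbatim). The actual argument stays entirely at the level of the \emph{model} profile and uses neither mGH-compactness, nor semicontinuity of the perimeter along converging spaces, nor the rigidity Theorem~\ref{thm:Rigidity}. It runs as follows: suppose $\diam(X)\leq \pi-\ve$. Then Theorem~\ref{T:iso} (this is the only place where the perimeter formulation enters) gives
\[
\cI_{(X,\sfd,\mm)}(v)\ \geq\ \cI_{N-1-\delta,\,N+\delta,\,\pi-\ve}(v).
\]
One then uses two facts about the model profile established in \cite{Mil,CM1}: for fixed $v\in(0,1)$ the map $(K,N,D)\mapsto \cI_{K,N,D}(v)$ is continuous, and $D\mapsto \cI_{N-1,N,D}(v)$ is strictly decreasing on $(0,\pi]$ with $\cI_{N-1,N,\pi}(v)=\cI_{N-1,N,\infty}(v)$. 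Hence $\cI_{N-1,N,\pi-\ve}(v)>\cI_{N-1,N,\infty}(v)$, and by continuity there is $\bar\delta=\bar\delta(N,v,\ve)>0$ such that $\cI_{N-1-\delta,N+\delta,\pi-\ve}(v)>\cI_{N-1,N,\infty}(v)+\delta$ for all $\delta\in[0,\bar\delta]$, contradicting the hypothesis.

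Your approach, besides being heavier, has a genuine gap: the rigidity Theorem~\ref{thm:Rigidity} that you invoke in the final step is stated only for $N\in[2,\infty)$, whereas Theorem~\ref{thm:AlmRig} is claimed for all $N>1$. This mismatch is already a strong hint that the intended proof does not pass through rigidity; the compactness-plus-rigidity mechanism you describe is exactly what is used for Corollary~\ref{cor:AlmRig}, not for Theorem~\ref{thm:AlmRig}. Moreover, your first suggested workaround for the semicontinuity step goes in the wrong direction: the inequality $\PP\leq \mm^+$ tells you nothing about $\mm^+(E_n)$ when you only control $\PP(E_n)$, so you cannot simply import the Minkowski-content argument from \cite{CM1} that way.
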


The following corollary is a consequence of the Maximal Diameter Theorem \cite{Ket}, and of  the compactness/stability of the class of $\RCD^*(K,N)$ spaces, for some fixed $K>0$ and $N>1$,  with respect to the measured Gromov-Hausdorff convergence (for more details see \cite[Section 6.4]{CM1}).  We denote by $\sfd_{mGH}$ the measured-Gromov Hausdorff distance between compact probability metric measure spaces.

\begin{corollary}[Almost equality in L\'evy-Gromov implies mGH-closeness to a spherical suspension] \label{cor:AlmRig}
For every $N\in [2, \infty) $, $v \in (0,1)$, $\ve>0$ there exists $\bar{\delta}=\bar{\delta}(N,v,\ve)>0$ such that the following hold. 
For every  $\delta \in [0, \bar{\delta}]$, if  $(X,\sfd,\mm)$ is an $\RCD^*(N-1-\delta,N+\delta)$ space satisfying 
$$
\cI_{(X,\sfd,\mm)}(v)\leq \cI_{N-1,N,\infty}(v)+\delta, 
$$
then  there exists an $\RCD^*(N-2,N-1)$ space $(Y, \sfd_Y, \mm_Y)$ with $\mm_Y(Y)=1$ such that 
$$
\sfd_{mGH}(X, [0,\pi] \times_{\sin}^{N-1} Y) \leq \ve. 
$$
\end{corollary}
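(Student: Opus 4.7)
My plan is to argue by contradiction, reducing the statement to Theorem \ref{thm:AlmRig} together with Ketterer's Maximal Diameter Theorem and the stability/compactness theory of $\RCD^*$ spaces under measured Gromov--Hausdorff convergence.

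Suppose the conclusion fails for some $N \in [2,\infty)$, $v \in (0,1)$ and $\varepsilon>0$. Then I can extract a sequence $\delta_n \downarrow 0$ together with $\RCD^*(N-1-\delta_n, N+\delta_n)$ spaces $(X_n,\sfd_n,\mm_n)$ satisfying $\mm_n(X_n)=1$ and
\[
\cI_{(X_n,\sfd_n,\mm_n)}(v) \leq \cI_{N-1,N,\infty}(v) + \delta_n,
\]
but such that $\sfd_{mGH}(X_n, [0,\pi] \times_{\sin}^{N-1} Y) > \varepsilon$ for every $\RCD^*(N-2,N-1)$ space $(Y,\sfd_Y,\mm_Y)$ with $\mm_Y(Y)=1$.

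First I apply Theorem \ref{thm:AlmRig} with $\varepsilon$ replaced by a vanishing sequence $\eta_n\downarrow 0$: for $n$ large enough so that $\delta_n \leq \bar\delta(N,v,\eta_n)$, the hypothesis forces $\diam(X_n) \geq \pi - \eta_n$. On the other hand, since $K_n := N-1-\delta_n \to N-1 > 0$ and $N_n := N+\delta_n$ is bounded, the generalised Bishop--Gromov inequality yields a uniform upper bound on the diameter and uniform doubling, so the sequence $\{(X_n,\sfd_n,\mm_n)\}$ is precompact in the measured Gromov--Hausdorff topology (this is the standard compactness theorem for $\RCD^*(K,N)$ spaces with $K>0$ quoted in \cite[Section 6.4]{CM1}). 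Passing to a subsequence, I obtain an mGH-limit $(X_\infty,\sfd_\infty,\mm_\infty)$. By the stability of the curvature-dimension condition under mGH-convergence, and since the parameters converge to $(N-1,N)$, the limit is $\RCD^*(N-1,N)$; moreover $\diam(X_\infty) = \lim_n \diam(X_n) = \pi$.

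At this point I invoke Ketterer's Maximal Diameter Theorem \cite{Ket}: an $\RCD^*(N-1,N)$ space with $N \geq 2$ and maximal diameter $\pi$ is isomorphic, as a metric measure space, to a spherical suspension $[0,\pi] \times_{\sin}^{N-1} Y$ over some $\RCD^*(N-2,N-1)$ space $(Y,\sfd_Y,\mm_Y)$ with $\mm_Y(Y)=1$. Thus for $n$ large enough $\sfd_{mGH}(X_n, [0,\pi] \times_{\sin}^{N-1} Y) < \varepsilon$, contradicting the choice of the sequence.

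The main technical point to handle carefully is the joint stability of $\RCD^*(K_n,N_n)$ with varying parameters $K_n \to K$ and $N_n \to N$, i.e.\ that the limit inherits $\RCD^*(K,N)$, but this is by now standard in the mGH-stability theory of $\RCD^*$ spaces referenced in \cite[Section 6.4]{CM1}; everything else is a straightforward contradiction/compactness argument built on Theorem \ref{thm:AlmRig} and the rigidity result of Ketterer.
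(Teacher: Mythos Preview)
Your argument is correct and follows essentially the same route the paper indicates: it combines Theorem \ref{thm:AlmRig} with the precompactness/stability of $\RCD^*$ spaces under mGH convergence and Ketterer's Maximal Diameter Theorem, via a standard contradiction/compactness argument. The only cosmetic point is the choice of the sequence $\eta_n$: you should pick $\eta_n\downarrow 0$ slowly enough (e.g.\ by a diagonal argument) so that $\delta_n\leq \bar\delta(N,v,\eta_n)$ holds for all large $n$, rather than presenting $\eta_n$ as given in advance.
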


Let  us finally mention the  closely related independent preprint of
Ambrosio, Gigli and Di Marino \cite{AGD}, where it is proved that on general metric measure spaces the perimeter is equal to the relaxation of the Minkowski content w.r.t. convergence in measure.

\section*{Acknowledgements}
Part  of the work has been  developed while A. M.  was in residence at the Mathematical Science Research Institute in Berkeley, California, during the spring  semester 2016 and was  supported by the National Science Foundation under the Grant No. DMS-1440140 and part when A. M. was supported by the EPSRC First Grant EP/R004730/1.

\section{Preliminaries}

The space of all Borel probability measures over $X$ will be denoted by $\mathcal{P}(X)$.
A metric space is a geodesic space if and only if for each $x,y \in X$ 
there exists $\gamma \in \Geo(X)$ so that $\gamma_{0} =x, \gamma_{1} = y$, with
$$
\Geo(X) : = \{ \gamma \in C([0,1], X):  \sfd(\gamma_{s},\gamma_{t}) = |s-t| \sfd(\gamma_{0},\gamma_{1}), \text{ for every } s,t \in [0,1] \}.
$$
Recall that for complete geodesic spaces local compactness is equivalent to properness (a metric space is proper if every closed ball is compact).
The most general case of spaces we will consider are essentially non-branching metric measure spaces $(X,\sfd,\mm)$ verifying $\CD_{loc}(K,N)$;
it is therefore not restrictive to assume that $\supp(\mm) = X$ and $(X,\sfd)$ to be proper and geodesic. 
Hence we will assume that the ambient metric space $(X, \sfd)$ is geodesic, complete, separable and proper and $\mm(X) = 1$.
We denote by $\Lip(X)$ the space of real-valued Lipschitz functions over $X$. Given $u \in \Lip(X)$ its slope $|\nabla u|(x)$ at $x\in X$ is defined by
\begin{equation}
|\nabla u|(x):=\limsup_{y\to x} \frac{|u(x)-u(y)|}{\sfd(x,y)}.
\end{equation}  
Following \cite{Am1,Am2,Mir} and the more recent \cite{ADM},
given a Borel subset $E \subset X$ and $A$ open, the perimeter $\mathsf{P}(E,A)$ is defined as follows
\begin{eqnarray}
\PP(E,A)&:=& \inf\left\{\liminf_{n\to \infty} \int_A |\nabla u_n| \,\mm \,:\,  u_n \in \Lip(A), \, u_n\to \chi_E \text{ in } L^1(A,\mm)\right\}.  \label{eq:defP} 
\end{eqnarray}
We say that $E \subset X$ has finite perimeter in $X$ if $\PP(E,X) < \infty$. We recall also few properties of the perimeter functions:
\begin{itemize}
\item[(a)] (locality) $\PP(E,A) = \PP(F,A)$, whenever $\mm(E\Delta F \cap A) = 0$;
\item[(b)] (l.s.c.) the map $E \mapsto \PP(E,A)$ is lower-semicontinuous with respect to the $L^{1}_{loc}(A)$ convergence;
\item[(c)] (complementation) $\PP(E,A) = \PP(E^{c},A)$.
\end{itemize}
Most importantly, if $E$ is a set of finite perimeter, then the set function $A \to \PP(E,A)$ is the restriction to open sets of a finite 
Borel measure $\PP(E,\cdot)$ in $X$ (see Lemma 5.2 of \cite{ADM}), defined by 
$$
\PP(E,B) : = \inf \{ \PP(E,A) \colon A \supset B, \ A\ \textrm{open} \}.
$$
Sometimes, for ease of notation, we will write $\PP(E)$ instead of $\PP(E,X)$. 
The outer Minkowski content  $\mm^+(E)$ of $E$ are defined respectively by
\begin{eqnarray}
\mm^+(E)&:=& \liminf_{\ve\downarrow 0} \frac{\mm(E^\ve)-\mm(E)}{\ve},   \label{eq:defm+}
\end{eqnarray}
where  $\chi_E$ is the characteristic  function of $E$, and  $E^\ve:=\{x\in X\,:\,   \sfd(E, x)\leq \ve \}$. 
With a slight abuse of notation we denoted $\sfd(E,x):=\inf_{y \in E} \sfd(y,x)$.

It is an immediate consequence of the definition that for open sets $\PP(E)\leq \mm^+(E)$; let give a short proof for the reader's convenience.
For the result in the smooth framework of Riemannian manifolds, see for instance \cite{BurZal}.

\begin{proposition}[$\PP(E)\leq \mm^+(E)$] \label{prop:P<m+}
Let $(X,\sfd,\mm)$ be a m.m.s. as above and $E\subset X$ be an open set. Then  $\PP(E)\leq \mm^+(E)$.
\end{proposition}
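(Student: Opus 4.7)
My plan is to exhibit an explicit Lipschitz approximation of $\chi_E$ realising the Minkowski-content bound, and then plug it into the definition \eqref{eq:defP} of $\PP(E)$.

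First, I would dispose of the trivial case: if $\mm^+(E)=+\infty$ there is nothing to prove, so I assume $\mm^+(E)<\infty$. In particular, taking a sequence $\ve_n\downarrow 0$ along which the $\liminf$ in \eqref{eq:defm+} is attained, one has $\mm(E^{\ve_n})-\mm(E)\to 0$; combined with the fact that $E^{\ve}\downarrow \overline{E}$ this forces $\mm(\overline{E}\setminus E)=0$, so $\chi_E=\chi_{\overline{E}}$ $\mm$-a.e. (this is the only place openness of $E$ will implicitly be used, through $E\subset\overline{E}$).

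Second, I would define for each $\ve>0$ the ``tent'' function
$$
u_\ve(x):=\max\Big\{1-\frac{\sfd(x,E)}{\ve},\,0\Big\}.
$$
Since $x\mapsto \sfd(x,E)$ is $1$-Lipschitz, $u_\ve\in \Lip(X)$ with Lipschitz constant $1/\ve$. By construction $u_\ve\equiv 1$ on $\overline{E}$ (hence on $E$) and $u_\ve\equiv 0$ on $\{x:\sfd(x,E)>\ve\}$. Both of these are open sets on which $u_\ve$ is locally constant, so its slope $|\nabla u_\ve|$ vanishes there, leaving
$$
|\nabla u_\ve|(x)\leq \frac{1}{\ve}\,\chi_{E^\ve\setminus E}(x)\qquad\text{for $\mm$-a.e. }x\in X.
$$
Integrating gives the key estimate
$$
\int_X |\nabla u_\ve|\,d\mm \,\leq\, \frac{\mm(E^\ve)-\mm(E)}{\ve}.
$$

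Third, I would verify $u_{\ve_n}\to \chi_E$ in $L^1(X,\mm)$ along the sequence chosen in the first step: since $0\leq u_\ve\leq \chi_{E^\ve}$ and $u_\ve=1=\chi_E$ on $E$,
$$
\int_X |u_\ve-\chi_E|\,d\mm \,\leq\, \mm(E^\ve)-\mm(E),
$$
and the right-hand side tends to $0$ by our reduction. Hence the sequence $\{u_{\ve_n}\}$ is admissible in the defining infimum \eqref{eq:defP} of $\PP(E,X)$, yielding
$$
\PP(E)\,\leq\,\liminf_{n\to\infty}\int_X |\nabla u_{\ve_n}|\,d\mm\,\leq\,\liminf_{n\to\infty}\frac{\mm(E^{\ve_n})-\mm(E)}{\ve_n}\,=\,\mm^+(E).
$$

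The proof is essentially routine; there is no serious obstacle. The only mildly delicate point to articulate cleanly is the vanishing of the slope on $E$ and on $\{\sfd(\cdot,E)>\ve\}$, which relies on the openness of these sets rather than on any measure-theoretic information, so the argument does not require a regularity assumption on $\partial E$.
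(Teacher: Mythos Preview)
Your proof is correct and follows essentially the same route as the paper: the identical tent function $u_\ve(x)=\max\{1-\ve^{-1}\sfd(x,E),0\}$, the same slope estimate on the shell $E^\ve\setminus E$, and the same passage to the limit along a sequence realising the $\liminf$.

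One small slip to clean up: you assert that ``both of these are open sets'', referring to $\overline{E}$ and $\{\sfd(\cdot,E)>\ve\}$, but $\overline{E}$ is of course closed, not open. The correct (and sufficient) observation---exactly what the paper uses---is that $E$ itself is open, so $u_\ve$ is locally constant there and $|\nabla u_\ve|=0$ on $E$; on $\overline{E}\setminus E$ the slope need not vanish, but since $\overline{E}\setminus E\subset E^\ve\setminus E$ the global Lipschitz bound $|\nabla u_\ve|\le 1/\ve$ already gives the pointwise inequality $|\nabla u_\ve|\le \ve^{-1}\chi_{E^\ve\setminus E}$ everywhere. With this fix, your preliminary remark that $\mm(\overline{E}\setminus E)=0$ becomes unnecessary and can be dropped.
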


\begin{proof}
Let $\ve_n\downarrow 0$ be such that 
\begin{equation}   \label{eq:defm+}
\mm^+(E) = \lim_{\ve_n\downarrow 0} \frac{\mm(E^{\ve_n})-\mm(E)}{\ve_n},
\end{equation}
and define $u_n(x):=\max\{0, 1-\ve_n^{-1} \sfd(x,E)\}$. 
Notice that if $x \in E$, since $E$ is open then  $u_n\equiv 1$ on a small ball around $x$ and therefore $|\nabla u_n|\equiv 0$ on $E$.  
Moreover $u_n \equiv 0 $ on $X\setminus E^{\ve_n}$ so, since $X\setminus E^{\ve_n}$ is open, 
we get  $|\nabla u_n|\equiv 0$ on $X\setminus E^{\ve_n}$. 
Finally it is clear from the definition that $|\nabla u_n|\leq \ve_n^{-1}$ on $E^{\ve_n}\setminus E$. Combining these informations we obtain
$$
\int_X |\nabla u_n| \, \mm \leq \frac{\mm(E^{\ve_n}\setminus E)}{\ve_n}.
$$ 
Now we can pass to the limit as $n \to \infty$ and use  \eqref{eq:defm+} to obtain the thesis.
\end{proof}


\subsection{Geometry of metric measure spaces}\label{Ss:geom}
Here we briefly recall the synthetic notions of lower Ricci curvature bounds, for more detail we refer to  \cite{BS10,lottvillani:metric,sturm:I, sturm:II, Vil}.

%
%

In order to formulate curvature properties for $(X,\sfd,\mm)$ we introduce the following distortion coefficients: given two numbers $K,N\in \erre$ with $N\geq0$, we set for $(t,\theta) \in[0,1] \times \erre_{+}$, 
\begin{equation}\label{E:sigma}
\sigma_{K,N}^{(t)}(\theta):= 
\begin{cases}
\infty, & \textrm{if}\ K\theta^{2} \geq N\pi^{2}, \crcr
\displaystyle  \frac{\sin(t\theta\sqrt{K/N})}{\sin(\theta\sqrt{K/N})} & \textrm{if}\ 0< K\theta^{2} <  N\pi^{2}, \crcr
t & \textrm{if}\ K \theta^{2}<0 \ \textrm{and}\ N=0, \ \textrm{or  if}\ K \theta^{2}=0,  \crcr
\displaystyle   \frac{\sinh(t\theta\sqrt{-K/N})}{\sinh(\theta\sqrt{-K/N})} & \textrm{if}\ K\theta^{2} \leq 0 \ \textrm{and}\ N>0.
\end{cases}
\end{equation}

We also set, for $N\geq 1, K \in \R$ and $(t,\theta) \in[0,1] \times \erre_{+}$
\begin{equation} \label{E:tau}
\tau_{K,N}^{(t)}(\theta): = t^{1/N} \sigma_{K,N-1}^{(t)}(\theta)^{(N-1)/N}.
\end{equation}

%
%
%
%
%
%
%
%
%

As we will consider only the case of essentially non-branching spaces, we recall the following definition. 
\begin{definition}\label{D:essnonbranch}
A metric measure space $(X,\sfd, \mm)$ is \emph{essentially non-branching} if and only if for any $\mu_{0},\mu_{1} \in \mathcal{P}_{2}(X)$,
with $\mu_{0}, \mu_{1}$ absolutely continuous with respect to $\mm$, any element of $\Opt(\mu_{0},\mu_{1})$ is concentrated on a set of non-branching geodesics.
\end{definition}

A set $F \subset \Geo(X)$ is a set of non-branching geodesics if and only if for any $\gamma^{1},\gamma^{2} \in F$, it holds:
$$
\exists \;  \bar t\in (0,1) \text{ such that } \ \forall t \in [0, \bar t\,] \quad  \gamma_{ t}^{1} = \gamma_{t}^{2}   
\quad 
\Longrightarrow 
\quad 
\gamma^{1}_{s} = \gamma^{2}_{s}, \quad \forall s \in [0,1].
$$

The classic definition of $\CD(K,N)$ given in \cite{lottvillani:metric,sturm:I, sturm:II} can be rewritten as follows (see \cite{CM:CCM}).

\begin{definition}[$\CD$ condition]\label{D:CD}
An essentially non-branching m.m.s. $(X,\sfd,\mm)$ verifies $\mathsf{CD}(K,N)$  if and only if for each pair 
$\mu_{0}, \mu_{1} \in \mathcal{P}_{2}(X,\sfd,\mm)$ there exists $\nu \in \Opt(\mu_{0},\mu_{1})$ such that for all $t \in [0,1]$,
\begin{equation}\label{E:CD}
\r_{t}^{-1/N} (\gamma_{t}) \geq  \tau_{K,N}^{(1-t)}(\sfd( \gamma_{0}, \gamma_{1}))\r_{0}^{-1/N}(\gamma_{0}) 
 + \tau_{K,N}^{(t)}(\sfd(\gamma_{0},\gamma_{1}))\r_{1}^{-1/N}(\gamma_{1}), \qquad 
\end{equation}
for $\nu$-a.e. $\gamma \in \Geo(X)$, where $({\rm e}_{t})_\sharp \, \nu = \r_{t} \mm$.
\end{definition}

It is worth recalling that if $(M,g)$ is a Riemannian manifold of dimension $n$ and 
$h \in C^{2}(M)$ with $h > 0$, then the m.m.s. $(M,g,h \, vol)$ verifies $\CD(K,N)$ with $N\geq n$ if and only if  (see Theorem 1.7 of \cite{sturm:II})
$$
Ric_{g,h,N} \geq  K g, \qquad Ric_{g,h,N} : =  Ric_{g} - (N-n) \frac{\nabla_{g}^{2} h^{\frac{1}{N-n}}}{h^{\frac{1}{N-n}}}.  
$$
In particular, if $I \subset \R$ is any interval, $h \in C^{2}(I)$ 
and $\mathcal{L}^{1}$ is the one-dimensional Lebesgue measure, the m.m.s. $(I ,|\cdot|, h \mathcal{L}^{1})$ verifies $\CD(K,N)$ if and only if  
\begin{equation}\label{E:CD-N-1}
\left(h^{\frac{1}{N-1}}\right)'' + \frac{K}{N-1}h^{\frac{1}{N-1}} \leq 0.
\end{equation}
%

We also mention the more recent Riemannian curvature dimension condition $\RCD^{*}$. This  consists in an   enforcement of the so called reduced curvature dimension condition, denoted by $\CD^{*}(K,N)$ and introduced in \cite{BS10}: the additional condition is that the Sobolev space $W^{1,2}(X,\mm)$ is an Hilbert space, see \cite{AGS11a, AGS11b,AGMR12}.
Remarkable features of the $\RCD^{*}(K,N)$ condition are their rectifiability and   the equivalence with the dimensional Bochner inequality  \cite{EKS, AMS}.

The reduced $\CD^{*}(K,N)$ condition asks for the same inequality \eqref{E:CD} of $\CD(K,N)$ but  the
coefficients $\tau_{K,N}^{(t)}(\sfd(\gamma_{0},\gamma_{1}))$ and $\tau_{K,N}^{(1-t)}(\sfd(\gamma_{0},\gamma_{1}))$ 
are replaced by $\sigma_{K,N}^{(t)}(\sfd(\gamma_{0},\gamma_{1}))$ and $\sigma_{K,N}^{(1-t)}(\sfd(\gamma_{0},\gamma_{1}))$, respectively.
For both definitions there is a local version; here we only state the one for $\mathsf{CD}(K,N)$, being clear what would be the one for $\mathsf{CD}^{*}(K,N)$.

\begin{definition}[$\CD_{loc}$ condition]\label{D:loc}
An essentially non-branching m.m.s. $(X,\sfd,\mm)$ satisfies $\CD_{loc}(K,N)$ if for any point $x \in X$ there exists a neighbourhood $X(x)$ of $x$ such that for each pair 
$\mu_{0}, \mu_{1} \in \mathcal{P}_{2}(X,\sfd,\mm)$ supported in $X(x)$
there exists $\nu \in \Opt(\mu_{0},\mu_{1})$ such that \eqref{E:CD} holds true for all $t \in [0,1]$.
The support of $({\rm e}_{t})_\sharp \, \nu$ is not necessarily contained in the neighbourhood $X(x)$.
\end{definition}

\subsection{The model Isoperimetric profile function $\cI_{K,N,D}$}\label{SS:IKND}
If $K>0$ and $N\in \N$, by the Levy-Gromov isoperimetric inequality 
we know that, for $N$-dimensional smooth manifolds having Ricci $\geq K$, the isoperimetric profile function is bounded below by the one of the $N$-dimensional round sphere of the suitable radius. In other words  the \emph{model} isoperimetric profile function is the one of ${\mathbb S}^N$. For $N\geq 1, K\in \R$ arbitrary real numbers the situation is  more complicated, and just recently E. Milman \cite{Mil} discovered what is the model isoperimetric profile. In this short section we recall its definition.
\\

Given $\delta>0$, set 
\[
\begin{array}{ccc}
 s_\delta(t) := \begin{cases}
\sin(\sqrt{\delta} t)/\sqrt{\delta} & \delta > 0 \\
t & \delta = 0 \\
\sinh(\sqrt{-\delta} t)/\sqrt{-\delta} & \delta < 0
\end{cases}

& , &

 c_\delta(t) := \begin{cases}
\cos(\sqrt{\delta} t) & \delta > 0 \\
1 & \delta = 0 \\
\cosh(\sqrt{-\delta} t) & \delta < 0
\end{cases}
\end{array} ~.
\]
Given a continuous function $f :\R \to  \R$ with $f(0) \geq 0$, we denote by $f_+ : \R \to \R^+ $ the function coinciding with $f$ between its first non-positive and first positive roots, and vanishing everywhere else, i.e. $f_+ := f \chi_{[\xi_{-},\xi_{+}]}$ with $\xi_{-} = \sup\{\xi \leq 0; f(\xi) = 0\}$ and $\xi_{+} = \inf\{\xi > 0; f(\xi) = 0\}$.

Given $H,K \in \R$ and $N \in [1,\infty)$, set $\delta := K / (N-1)$  and define the following (Jacobian) function of $t \in \R$:
\begin{equation}\label{def:J}
J_{H,K,N}(t) :=
\begin{cases}
\chi_{\{t=0\}}  & N = 1 , K > 0 \\
\chi_{\{H t \geq 0}\} & N = 1 , K \leq 0 \\
\left(c_\delta(t) + \frac{H}{N-1} s_\delta(t)\right)_+^{N-1} & N \in (1,\infty) \\
\end{cases} ~.
\end{equation}
As last piece of notation, given a non-negative integrable function $f$ on a closed interval $L \subset \R$, we denote with $\mu_{f,L}$  
the probability measure supported in $L$ with density (with respect to the Lebesgue measure) proportional to $f$ there. In order to simplify a bit the notation we will write
$\cI_{(L,f)}$ in place of $\cI_{(L,\, |\cdot|, \mu_{f,L})}$.
\\The model isoperimetric profile for spaces having Ricci $\geq K$, for some $K\in \R$, dimension bounded above by $N\geq 1$ and diameter at most $D\in (0,\infty]$ is then defined by
\begin{equation}\label{eq:defIKND}
\cI_{K,N,D}(v):=\inf_{H\in \R,a\in [0,D]} \cI_{\left([-a,D-a], J_{H,K,N}\right)} (v), \quad \forall v \in [0,1]. 
\end{equation}
The formula above has the advantage of considering all the possible cases in just one equation, for the explicit discussion of the different cases we refer to  \cite[Section 4] {Mil}. Here let us just note that when $N$ is an integer, $$\cI_{\big( [0,  \sqrt{\frac{N-1}{K}} \pi ], ( \sin(\sqrt{\frac{K}{N-1} } t)^{N-1}\big)} = \cI_{({\mathbb S}^{N}, g^K_{can}, \mu^K_{can})}$$ by the isoperimetric inequality on the sphere, so  the case $K>0$  with $N$ integer corresponds to L\'evy-Gromov isoperimetric inequality.

\subsection{1-D localization}

Before stating the next result let us recall that  $\CD^{*}(K,N)$ and $\CD_{loc}(K,N)$ are equivalent if  $1 < N <\infty$ or $N =1$ and $K \geq 0$, but for $N =1$ and $K < 0$ the $\CD_{loc}(K,N)$ condition is strictly stronger than $\CD^{*}(K,N)$.

\begin{theorem}\cite[Theorem 5.1]{CM1}\label{T:localize}
Let $(X,\sfd, \mm)$ be an essentially non-branching metric measure space verifying the $\CD_{loc}(K,N)$ condition for some $K\in \R$ and $N\in [1,\infty)$. 
Let $f : X \to \R$ be $\mm$-integrable such that $\int_{X} f\, \mm = 0$ and assume the existence of $x_{0} \in X$ such that $\int_{X} | f(x) |\,  \sfd(x,x_{0})\, \mm(dx)< \infty$. 
\medskip

Then the space $X$ can be written as the disjoint union of two sets $Z$ and $\mathcal{T}$ with $\mathcal{T}$ admitting a partition 
$\{ X_{q} \}_{q \in Q}$ and a corresponding disintegration of $\mm\llcorner_{\mathcal{T}}$, $\{\mm_{q} \}_{q \in Q}$ such that: 

\begin{itemize}
\item For any $\mm$-measurable set $B \subset \mathcal{T}$ it holds 
$$
\mm(B) = \int_{Q} \mm_{q}(B) \, \qq(dq), 
$$
where $\qq$ is a probability measure over $Q$ defined on the quotient $\sigma$-algebra $\mathcal{Q}$. 
\medskip
\item For $\qq$-almost every $q \in Q$, the set $X_{q}$ is a geodesic and $\mm_{q}$ is supported on it. 
Moreover $q \mapsto \mm_{q}$ is a $\CD(K,N)$ disintegration, i.e.  for $\qq$-a.e. $q \in Q$ the following curvature inequality holds:  
\begin{equation}\label{E:curvdensmm}
h_{q}( (1-s)  t_{0}  + s t_{1} )^{1/(N-1)}  
 \geq \sigma^{(1-s)}_{K,N-1}(t_{1} - t_{0}) h_{q} (t_{0})^{1/(N-1)} + \sigma^{(s)}_{K,N-1}(t_{1} - t_{0}) h_{q} (t_{1})^{1/(N-1)},
\end{equation}
for all $s\in [0,1]$ and for all $t_{0}, t_{1} \in \dom(g(q,\cdot))$ with  $t_{0} < t_{1}$. If $N =1$, for $\qq$-a.e. $q \in Q$ the density $h_{q}$ is constant.

\item For $\qq$-almost every $q \in Q$, it holds $\int_{X_{q}} f \, \mm_{q} = 0$ and $f = 0$ $\mm$-a.e. in $Z$.
\end{itemize}
\end{theorem}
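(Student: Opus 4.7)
The plan is to read off the partition from an $L^{1}$-Optimal Transportation problem driven by $f$. Writing $f = f^{+} - f^{-}$, the hypotheses $\int f \, \mm = 0$ and $\int |f(x)|\, \sfd(x,x_{0})\, \mm(dx) < \infty$ assert exactly that $\mu_{0} := f^{+}\mm$ and $\mu_{1} := f^{-}\mm$ are finite measures of equal mass with finite first moment. Kantorovich duality for the cost $c(x,y) = \sfd(x,y)$ then produces a $1$-Lipschitz potential $\varphi : X \to \R$ optimal between $\mu_{0}$ and $\mu_{1}$. The entire structure will be extracted from $\varphi$: the transport set $\T$ is the set of $x \in X$ through which some nontrivial unit-speed geodesic $\gamma$ passes with $\varphi \circ \gamma$ linear of slope $\pm 1$, and $Z := X \setminus \T$ is its complement.

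On $\T$ I declare $x \sim y$ whenever a unit-speed geodesic from $x$ to $y$ exists on which $\varphi$ is affine with slope $\pm 1$. After removing an $\mm$-negligible ``branching locus'' from $\T$ (points that sit on two distinct maximal rays), $\sim$ becomes an equivalence relation whose classes $X_{q}$ are the maximal transport rays. A measurable selection produces a quotient map $\QQ : \T \to Q$ onto a Polish quotient, and the disintegration theorem yields probability conditionals $\{\mm_{q}\}_{q \in Q}$ concentrated on the rays together with the quotient measure $\qq = \QQ_{\sharp}(\mm \llcorner \T)$. The mass balance $\int_{X_{q}} f \, \mm_{q} = 0$ for $\qq$-a.e.\ $q$ follows from $\QQ_{\sharp}(f\mm \llcorner \T) = 0$, which is just the statement that on each ray the incoming $f^{+}$ mass balances the outgoing $f^{-}$ mass, by optimality of $\varphi$. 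Similarly, $f = 0$ $\mm$-a.e.\ on $Z$, since any nonzero contribution of $f$ there would generate a nontrivial transport ray through that point.

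The analytic heart is the curvature inequality \eqref{E:curvdensmm} for the densities $h_{q}$ of $\mm_{q}$. Fix a ray and two small subsegments $A_{0}, A_{1} \subset X_{q}$. Thicken them into ``needles'' $\hat A_{0}, \hat A_{1}$ formed by neighbouring rays, consider the optimal coupling between the normalized restrictions of $\mm$ to $\hat A_{i}$, and apply $\CD_{loc}(K,N)$ on each small piece to obtain the associated $\tau_{K,N}^{(t)}$-Brunn--Minkowski inequality. The essentially non-branching hypothesis pins the $\nu$-a.e.\ geodesic close to the central ray, so in the limit the transverse factor contributes the trivial piece $t^{1/N}$ of $\tau_{K,N}^{(t)}$ in \eqref{E:tau}, leaving $\sigma_{K,N-1}^{(1-s)}$ and $\sigma_{K,N-1}^{(s)}$ acting on the one-dimensional density $h_{q}$; this is exactly \eqref{E:curvdensmm}. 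In the degenerate case $N=1$ the same scheme forces $h_{q}$ to be constant.

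The main obstacle is this last step: turning a multidimensional local optimal-transport inequality into a one-dimensional Jacobian inequality on $\mm$-a.e.\ ray. Two subtleties deserve mention: the regime $N=1$, $K<0$ genuinely requires $\CD_{loc}$ rather than $\CD^{\ast}$, and essential non-branching is what prevents the needle construction from degenerating into an uncontrolled union of geodesics. Existence of $\varphi$, measurability of the ray decomposition, and the mass balance are all comparatively standard once the optimal transport is in hand.
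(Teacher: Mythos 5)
Your overall strategy is exactly the one behind the cited result: this paper does not reprove Theorem \ref{T:localize} but imports it from \cite[Theorem 5.1]{CM1}, whose proof is precisely the $L^1$-optimal transport localization you outline (Kantorovich potential $\varphi$ for $\mu_0=f^+\mm$, $\mu_1=f^-\mm$, decomposition of the transport set into rays, disintegration, mass balance on rays). The first and third bullets of the statement are handled in your sketch in essentially the correct way. However, there are two places where what you write is not yet a proof, and one of them misidentifies the mechanism.

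First, the removal of an ``$\mm$-negligible branching locus'' is not a harmless normalization: proving that the set of branching/endpoint configurations is $\mm$-negligible under the \emph{essentially} non-branching hypothesis is a genuine theorem (it uses the curvature assumption through measure-contraction type estimates, and is one of the technical contributions of \cite{CM1}, building on Bianchini--Cavalletti and Cavalletti); asserting it is a gap. Second, and more seriously, the derivation of \eqref{E:curvdensmm} as you describe it --- thicken two subsegments into needles of neighbouring rays, apply $\CD_{loc}(K,N)$, and argue that ``in the limit the transverse factor contributes the trivial piece $t^{1/N}$ of \eqref{E:tau}'' --- is not how the inequality is obtained and, as stated, has no quantitative control to pass to the limit (the conditional densities $h_q$ are only defined $\qq$-a.e., and nothing in the sketch relates the $N$-dimensional distortion of the thickened sets to the one-dimensional density of a single fixed ray). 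The actual argument runs differently: one shows that translation along the rays is itself an optimal transport, and essential non-branching forces the interpolating Wasserstein geodesics to live on the rays (there is no transverse motion to ``pin''); applying $\CD_{loc}(K,N)$ to these translations yields an inequality for $h_q$ with $\tau_{K,N}$ coefficients, and a separate, purely one-dimensional local-to-global analysis (using $\tau^{(t)}_{K,N}(\theta)=t^{1/N}\sigma^{(t)}_{K,N-1}(\theta)^{(N-1)/N}$ and the equivalence recorded in \eqref{E:CD-N-1}) converts this into the $\sigma_{K,N-1}$-concavity of $h_q^{1/(N-1)}$; the case $N=1$, $K<0$, which you correctly flag, is handled exactly at this one-dimensional stage, where $\CD_{loc}$ is strictly stronger than $\CD^{*}$. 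Unless you supply these two steps (negligibility of the branching set, and the translation-plus-1D argument for \eqref{E:curvdensmm}), the proposal remains an outline of \cite{CM1} rather than a proof.
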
 

Let us also mention that  we can define a Borel \emph{ray map} $g: \textrm{Dom}(g)\subset Q \times \R \to X$ such that for every $q\in Q$, the map $\R\supset \dom(g(q,\cdot))\ni t\mapsto g(q,t)$ is an arc-length parametrisation of the geodesic $X_q$, i.e.
$$X_q=  g(q,\cdot) (\dom(g(q,\cdot)))  \quad \text{ and } \quad  \sfd(g(q,s), g(q,t) )= |t-s|, \quad \forall q \in Q, \forall s,t \in \dom(g(q,\cdot)).$$ 
For more details see  \cite[Section 4]{biacava:streconv},  \cite[Section 3]{CM1} and references therein.

\section{Isoperimetric inequalities in terms of the perimeter}\label{S:onedimensional}

\subsection{The one dimensional case}

Given $K\in \R, N\in[1,+\infty)$ and $D\in (0,+\infty]$, consider the following family of probability measures

\begin{eqnarray}
\mathcal{F}^{s}_{K,N,D} : = \{ \mu \in \mathcal{P}(\R) : &\supp(\mu) \subset [0,D], \, \mu = h_{\mu} \mathcal{L}^{1},\,
h_{\mu}\, \textrm{verifies} \, \eqref{E:curvdensmm} \ \textrm{and is continuous if } N\in (1,\infty), \nonumber \\ 
& \quad h_{\mu}\equiv \textrm{const} \text{ if }N=1   \}.
\end{eqnarray}
In what follows we will assume $h_{\mu}$ to be defined on the whole $\R$, vanishing outside of $\supp(\mu)$.\\
Denote with $\mathcal{I}^{s}_{K,N,D}$ the corresponding  comparison \emph{synthetic} isoperimetric profile
$$
\mathcal{I}^{s}_{K,N,D}(v) : = \inf \left\{ \mu^{+}(A) \colon A\subset \R, \,\mu(A) = v, \, \mu \in \mathcal{F}^{s}_{K,N,D}  \right\},
$$
where $\mu^{+}(A)$ denotes the Minkowski content. The term synthetic refers to $\mu \in \mathcal{F}^{s}_{K,N,D}$ 
meaning that the Ricci curvature bound is satisfied in its synthetic formulation: if $\mu = h \cdot \mathcal{L}^{1}$, then $h$ verifies \eqref{E:curvdensmm}. 
It was proved in \cite[Theorem 6.3]{CM1} that  for every $v \in [0,1]$ it holds $\mathcal{I}^{s}_{K,N,D}(v)= \mathcal{I}_{K,N,D}(v)$.

It is worth also specifying the formula \eqref{eq:defP} to the one dimensional case.
So if $(\supp(\mu), |\cdot|, \mu) \in \mathcal{F}^{s}_{K,N,D}$ and $B$ is a Borel set:
\begin{equation}\label{E:perimeter1d}
\PP_{(\supp(\mu), |\cdot|, \mu)} (B) =
\inf\left\{\liminf_{n\to \infty} \int_{\supp(\mu)} | u_n'| h_{\mu}\, \L^{1} \,:\,  u_n \in \Lip(\supp(\mu)), \, u_n\to \chi_{B} \text{ in } L^1_{loc}(\mu)\right\}.
\end{equation}
Since $\mu \in \mathcal{F}^{s}_{K,N,D}$, it follows that $\supp(\mu)$ is an interval that, up to a translation, is a subset of $[0,D]$.
Note that from the locality of the perimeter, if $E \subset \supp(\mu)$ is a Borel set, then 
$$
\PP_{(\supp(\mu), |\cdot|, \mu)}(E)= \PP_{([0,D], |\cdot|, \mu)}(E).
$$
In the next lemma we show that, in the one dimensional case, the perimeter has a precise representation.

\begin{proposition}\label{prop:P1D} 
Let $\mu= h_{\mu} \mathcal{L}^{1} \in \mathcal{F}^{s}_{K,N,D}$ 
and let $E \subset \supp(\mu)$ be a Borel subset with $\PP_{([0,D], |\cdot|, \mu)} (E)<\infty$. 
Then there exist countably many disjoint closed intervals $\{[a_i, b_i]\}_{i \in \N}$ such that  $\mu(E \triangle  \bigcup_{i \in \N} [a_i, b_i] )=0$, and 
$$
\PP_{([0,D], |\cdot|, \mu)} (E)=\sum_{i \in \N} \big( h_\mu(a_i) + h_{\mu} (b_i) \big).
$$
\end{proposition}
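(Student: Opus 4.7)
The plan is first to extract the structural fact that, up to a $\mu$-null set, $E$ decomposes as a countable disjoint union of closed intervals $[a_{i},b_{i}]$, and then to establish the perimeter identity by matching upper and lower bounds.

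\smallskip

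\textbf{Structure of $E$.} Let $\supp(\mu)=[c_{0},c_{1}]$. On every closed interval $I\subset (c_{0},c_{1})$, continuity of $h_{\mu}$ yields constants $0<m_{I}\leq h_{\mu}\leq M_{I}<\infty$ on $I$. Hence any admissible sequence $u_{n}\in\Lip$ with $u_{n}\to\chi_{E}$ in $L^{1}(\mu)$ and $\sup_{n} \int |u_{n}'|h_{\mu}\,d\L^{1}<\infty$ satisfies $u_{n}\to\chi_{E}$ in $L^{1}(I,\L^{1})$ with $\sup_{n}\int_{I}|u_{n}'|\,d\L^{1}<\infty$. Classical one dimensional $\BV$ theory then forces $\chi_{E}\llcorner I$ to coincide $\L^{1}$-a.e.\ with the indicator of a finite disjoint union of closed intervals; exhausting $(c_{0},c_{1})$ and merging adjacent intervals produces a countable family $\{[a_{i},b_{i}]\}_{i\in\N}$ with $\mu(E\triangle\bigcup_{i}[a_{i},b_{i}])=0$.

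\smallskip

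\textbf{Upper bound.} For each $i$ and small $\epsilon>0$, the tent function $u^{(i)}_{\epsilon}(x):=\max\{0,\, 1-\epsilon^{-1}\dist(x,[a_{i},b_{i}])\}$ has slope bounded by $\epsilon^{-1}\chi_{[a_{i}-\epsilon,a_{i}]\cup[b_{i},b_{i}+\epsilon]}$, and continuity of $h_{\mu}$ gives
\[
\int|\nabla u^{(i)}_{\epsilon}|\,h_{\mu}\,d\L^{1} \longrightarrow h_{\mu}(a_{i})+h_{\mu}(b_{i}) \quad \text{as } \epsilon\downarrow 0.
\]
Summing finitely many tents with pairwise disjoint supports (choose $\epsilon$ smaller than half the minimum gap between endpoints and between endpoints and $\{c_{0},c_{1}\}$), then letting the number of tents tend to $\infty$ with $\epsilon\downarrow 0$ appropriately, produces a recovery sequence for $\chi_{E}$ in $L^{1}(\mu)$; hence $\PP(E)\leq\sum_{i}(h_{\mu}(a_{i})+h_{\mu}(b_{i}))$.

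\smallskip

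\textbf{Lower bound.} Let $v_{n}\in\Lip$ with $v_{n}\to\chi_{E}$ in $L^{1}(\mu)$ and $\int|v_{n}'|h_{\mu}\,d\L^{1}\to\PP(E)$. Fix $M\in\N$ and choose $\eta>0$ smaller than a quarter of the minimum gap among $\{a_{1},b_{1},\dots,a_{M},b_{M}\}$ and of their distance to $\{c_{0},c_{1}\}$ (endpoints coinciding with $c_{0}$ or $c_{1}$ are trivial since then $h_{\mu}$ vanishes there). Around each $a_{i}$, $i\leq M$, set $J_{i}^{-}:=[a_{i}-2\eta,\, a_{i}-\eta]$ and $J_{i}^{+}:=[a_{i}+\eta,\, a_{i}+2\eta]$; by the choice of $\eta$, $J_{i}^{-}\cap E$ and $J_{i}^{+}\setminus E$ are $\mu$-null, so the $\mu$-averages of $v_{n}$ on $J_{i}^{\pm}$ converge to $0$ and $1$ respectively. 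Continuity of $v_{n}$ yields $x_{n}^{\pm}\in J_{i}^{\pm}$ attaining these averages, and the fundamental theorem of calculus gives
\[
\int_{a_{i}-2\eta}^{a_{i}+2\eta}|v_{n}'|h_{\mu}\,d\L^{1} \geq \Big(\min_{[a_{i}-2\eta,\,a_{i}+2\eta]} h_{\mu}\Big)\,|v_{n}(x_{n}^{+})-v_{n}(x_{n}^{-})|.
\]
Taking $\liminf_{n}$ and then $\eta\downarrow 0$ gives $h_{\mu}(a_{i})$ by continuity; an analogous argument at each $b_{i}$ contributes $h_{\mu}(b_{i})$. As the $4\eta$-neighborhoods are pairwise disjoint, summing over $i\leq M$ and letting $M\to\infty$ yields $\PP(E)\geq\sum_{i}(h_{\mu}(a_{i})+h_{\mu}(b_{i}))$. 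The main subtlety is applying this averaging argument simultaneously at countably many endpoints on pairwise disjoint supports, and handling endpoints possibly coinciding with $\partial\supp(\mu)$---where continuity of $h_{\mu}$ and its vanishing off the support make the corresponding contribution degenerate to zero, in agreement with the formula.
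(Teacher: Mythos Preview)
Your argument is correct and, for the structure of $E$ and the upper bound, runs parallel to the paper's proof (bound $h_{\mu}$ away from zero on compact subintervals of the interior, invoke classical one-dimensional $\BV$ theory, then use tent functions for a recovery sequence). The genuine difference is in the \emph{lower bound}. The paper first reduces to single intervals by appealing to the additivity of the perimeter measure over sets at positive distance, obtaining $\PP(E)\geq \sum_{i}\PP([a_{i},b_{i}])$, and then for each interval manipulates an optimal approximating sequence (truncating to $[0,1]$, forcing constancy outside a transition zone, integrating by parts) to identify $\PP([a_{i},b_{i}])=h_{\mu}(a_{i})+h_{\mu}(b_{i})$. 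Your approach bypasses both steps: you work directly with a single optimal sequence for $E$, localise near each endpoint via the weighted mean-value theorem and the fundamental theorem of calculus, and sum the disjoint contributions. This is more elementary and avoids invoking the measure-theoretic structure of $\PP(E,\cdot)$; the paper's route, on the other hand, cleanly isolates the identity $\PP([a,b])=h_{\mu}(a)+h_{\mu}(b)$ as a standalone fact.

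Two small points of precision are worth tightening. First, in the lower bound your choice of $\eta$ is stated only relative to the first $M$ endpoints; you should also ensure the $2\eta$-neighbourhoods avoid intervals with index $>M$. This is automatic once you observe (from your own structure step) that on any compact subinterval of $(c_{0},c_{1})$ only finitely many $[a_{i},b_{i}]$ appear, so every interior endpoint is isolated in the full family. Second, your remark that endpoints coinciding with $c_{0}$ or $c_{1}$ contribute zero ``since $h_{\mu}$ vanishes there'' is not always true as stated: for $N=1$ the density is a nonzero constant, and for $N>1$ with $K\leq 0$ one may also have $h_{\mu}(c_{0})>0$. The proposition only asserts the \emph{existence} of a representing family, and (as the paper notes) the intervals need not lie inside $\supp(\mu)$; the correct move is to extend any interval that reaches $\partial\supp(\mu)$ slightly beyond it, where $h_{\mu}\equiv 0$ by the convention of extending $h_{\mu}$ by zero off its support. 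With that choice the formula holds and your argument goes through unchanged.
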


Note that the disjoint closed intervals given by Proposition \ref{prop:P1D} are not necessarily subsets of $\supp (\mu)$. In particular it may happen 
that $h_{\mu}(a_{i})=0$ or $h_{\mu}(b_{j})=0$ for some $i,j \in \N$. Note moreover that from the disjointness of the family of closed intervals 
verifying $\mu(E \triangle  \bigcup_{i \in \N} [a_i, b_i] )=0$ it follows also their uniqueness inside $\supp(\mu)$.

\begin{proof}

{\bf Step 1.} $E$ is countable union of intervals.\\
If $D=0$ everything trivializes so we can assume $D>0$. 
Possibly choosing a smaller $D\geq0$ and operating a shift of the interval, we can also assume 
that $0=\inf \{t\in[0,D]: h_{\mu}(t)>0)\}$ and $D=\sup \{t\in[0,D]: h_{\mu}(t)>0)\}$.

Since $1=\mu([0,D])=\int_{[0,D]} h_{\mu}(t) \mathcal{L}^{1}(dt)$, there exists $t_0 \in [0,D]$ such that $h_{\mu}(t_0)\geq 1/D$. 
From the concavity condition \eqref{E:curvdensmm} it is not difficult to check that $h_{\mu}(t)>0$  for all $t \in (0,D)$ and for every $\ve> 0$ and any $t \in [\ve,D-\ve]$ 
it holds $h_{\mu}(t)\geq C(\ve) >0$. Then from \eqref{E:perimeter1d}: 
$$
\PP_{([0,D], |\cdot|, \mu)} (E) \geq \PP_{([\ve,D-\ve], |\cdot|, \mu)} (E) \geq C(\ve)\cdot \PP_{([\ve,D-\ve], |\cdot|, \L^{1})} (E).
$$
It follows from standard results on one-dimensional sets of finite perimeter w.r.t. the Lebesgue measure (see for instance \cite{AFP}) 
that, up to an ${\mathcal L}^1$-negligible subset,  $\overline{E} \cap [\ve, D-\ve]$ 
is the finite union of disjoint closed intervals contained in $[\ve,D-\ve]$, where $\overline{E}$ denotes the closure of $E$, and 
$\mathring{E}  \cap [\ve, D-\ve]$ is the finite union of the interior of the intervals contained in $[\ve,D-\ve]$ considered before, 
where $\mathring{E}$ denotes the interior of $E$.
Repeating the same argument with a sequence of $\ve_{n} \to 0$, it follows that $\overline{E}$, up to a set of $\mu$-measure zero,
is the countable union of disjoint closed intervals, say $\{[a_{i},b_{i}]\}_{i\in \N}$.
 
%

\medskip

{\bf Step 2.} Representation formula: we now prove the identity of the claim assuming $\cup_{i\in \N} [a_{i},b_{i}] \subset (0,D)$, 
with $a_{i}, b_{i} \to 0$ as $i \to \infty$, as the general case follows similarly. \\
Let us first prove the inequality
\begin{equation}\label{eq:P>sum}
\PP_{([0,D], |\cdot|, \mu)} (E) \geq \sum_{i\in \N} h_{\mu}(a_{i}) + h_{\mu}(b_{i}). 
\end{equation}
%
First of all, since
the perimeter is additive over sets at strictly positive distance, we observe that $\PP_{([\ve,D-\ve], |\cdot|, \mu)} (E) = \PP_{([\ve,D-\ve], |\cdot|, \mu)} (\cup_{i\in I_{\ve}} [a_{i},b_{i}])$, for every  finite subset  $\ve > 0$. It follows that 
\begin{equation}\label{eq:PPgeqsumi}
\PP_{([0,D], |\cdot|, \mu)} (E) \geq \sum_{i\in \N} \PP_{([0,D], |\cdot|, \mu)} ([a_{i},b_{i}]).
\end{equation}
Now we show that when $[a_{i},b_{i}] \subset (0,D)$ (the general case follows similarly):
\begin{equation}\label{eq:Paibi}
\PP_{([0,D], |\cdot|, \mu)} ([a_{i},b_{i}]) = h_{\mu}(a_{i}) + h_{\mu}(b_{i}). 
\end{equation}
First notice that by taking  $u_{n}$ constantly equal to $1$ on $[a_{i},b_{i}]$, equal to  $0$ on $[0,a_{i} - 1/n]$ and $[b_{i}+1/n,D]$, and 
affine elsewhere, we get 
$$
\int_{\supp(\mu)} |u_{n}(x)'|h_{\mu}(x) dx=\frac{1}{n}\left( \int_{[a_{i}-1/n, a_{i}]} h_{\mu}(x)dx + \int_{[b_{i}, b_{i}+1/n]} h_{\mu}(x)dx \right),
$$
yielding, by the continuity of $h_{\mu}$, $\PP_{([0,D], |\cdot|, \mu)} ([a_{i},b_{i}]) \leq h_{\mu}(a_{i}) + h_{\mu}(b_{i})$.

Consider now any sequence of Lipschitz functions $u_{n}$ converging to $\chi_{[a_{i}, b_{i}]}$ in $L^{1}(\mu)$ and observe that
without loss of generality we can assume that $0 \leq u_{n}\leq 1$ (otherwise we can truncate the function finding a sequence with lower energy).
For the same reason, possibly taking a subsequence $u_{n} \to \chi_{[a_{i},b_{i}]}$ pointwise a.e., we can assume that 
$$
u_{n}^{-1}((0,1/n)) = [0,x_{n}^{-}) \cup (x_{n}^{+} , D]
$$
and 
$$
u_{n}^{-1}((1-1/n,1] )  = (y_{n}^{-}, y_{n}^{+}), 
$$
for some $x_{n}^{\pm}, y_{n}^{\pm}$ with $x_{n}^{-} < y_{n}^{-} < y_{n}^{+} < x_{n}^{+}$, and $x_{n}^{-}, y_{n}^{-} \to a_{i}$ and $x_{n}^{+}, y_{n}^{+} \to b_{i}$. 
Finally we may also assume $|u_{n}'| = u_{n}'$ in $(x_{n}^{-}, y_{n}^{-})$ and $|u_{n}'| = -u_{n}'$ in $(x_{n}^{+}, y_{n}^{+})$.
Now definining 
$$
\hat u_{n} (x) : = \begin{cases}
1/n, & [0,x_{n}^{-})\cup (x_{n}^{+},D]; \\
1-1/n, & (y_{n}^{-},y_{n}^{+}); \\ 
u_{n}, & \textrm{elsewhere},
\end{cases}
$$
we obtain an approximating function with $|\hat u_{n}'| \leq | u_{n}'|$, $\L^{1}$-a.e. over $[0,D]$. Integrating by parts, it follows straightforwardly that 
$$
\lim_{n\to \infty} \int_{\supp(\mu)} |\hat u_{n}'(x)|h(x)dx = h_{\mu}(a_{i}) + h_{\mu}(b_{i}).
$$
Therefore \eqref{eq:Paibi} is proved and it follows that $\PP_{([0,D], |\cdot|, \mu)} (E) \geq \sum_{i\in \N} h_{\mu}(a_{i}) + h_{\mu}(b_{i})$.

To show the converse inequality, consider $u_{n}$ to be the affine approximations of $\sum_{i \leq n}\chi_{[a_i,b_i]}$, constructed as follows: 
assume that $a_{i} > b_{i+1} > a_{i+1}$ and $a_{i}, b_{i} \to 0$ as $i \to \infty$; then for each $n$ consider $m(n) \in \N$ such that 
$$
 a_{i} - b_{i+1} > \frac{2}{m(n)}, \qquad \textrm{for each } i \leq n.
$$
and such that 
\begin{equation}\label{E:m(n)}
\lim_{n\to \infty} \frac{n L_{n}}{m(n)} = 0,
\end{equation}
where $L_{n}$ is the Lipschitz constant of $h_{\mu}$ restricted on $[b_{n+1}, b_{1} + (D-b_{1})/2]$.
Then define $u_{n}$ to be equal 1 on $\cup_{i \leq n} [a_{i},b_{i}]$, to be $0$ on 
$$
[0, a_{n} - 1/m(n)] \bigcup_{i< n} [b_{i+1}+ 1/m(n), a_{i}-1/m(n)]     \cup  [b_{1} +1/m(n),D],
$$
and $u_{n}$ affine elsewhere, so that $u_{n}$ will be Lipschitz.
It follows that 
\begin{align*}
\int_{\supp(\mu)} |u_{n}'(x)| h_{\mu}(x) dx 
&~ = \sum_{i\leq n} m(n) \left( \int_{[a_{i}-1/m(n), a_{i}]} h_{\mu}(x)dx + \int_{[b_{i}, b_{i}+1/m(n)]} h_{\mu}(x)dx \right) \\
&~ \leq \sum_{i\leq n}  h_{\mu}(a_{i}) + \frac{1}{m(n)}L_{n} + h_{\mu}(b_{i})  \\
&~ \leq \frac{nL_{n}}{m(n)} +  \sum_{i\in\N }  h_{\mu}(a_{i}) + h_{\mu}(b_{i}).
\end{align*}
It follows from $\eqref{E:m(n)}$  that 
$$
\PP_{([0,D], |\cdot|, \mu)} (E)  \leq \liminf_{n\to \infty}\int_{\supp(\mu)} |u_{n}'(x)| h_{\mu}(x)\, dx \leq  \sum_{i\in\N }  h_{\mu}(a_{i}) + h_{\mu}(b_{i}).
$$
The claim follows.
%
%
%
%
%
\end{proof}

We then obtain the following

\begin{corollary}\label{C:IKND}
Let $\mu= h_{\mu} \mathcal{L}^{1} \in \mathcal{F}^{s}_{K,N,D}$, then for any $v \in [0,1]$
\begin{equation}\label{E:perimeter-I}
\inf \{ \PP_{([0,D], |\cdot|, \mu)} (E)\, :\, E\subset [0,D], \;  \mu(E)=v\}=  \inf \{ \mu^+(E)\, :\, E\subset [0,D], \;  \mu(E)=v\} \geq \mathcal{I}_{K,N,D}(v).
\end{equation}
\end{corollary}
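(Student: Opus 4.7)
First, the lower bound $\inf\mu^+\ge\cI_{K,N,D}(v)$ is not new: it follows directly from $\mu\in\mathcal{F}^s_{K,N,D}$ together with the identity $\cI^s_{K,N,D}(v)=\cI_{K,N,D}(v)$ proved in \cite[Theorem~6.3]{CM1}. The content of Corollary~\ref{C:IKND} therefore reduces to the equality of the two infima on the left-hand side of \eqref{E:perimeter-I}.

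The inequality $\inf\PP\le\inf\mu^+$ is a direct consequence of Proposition~\ref{prop:P<m+}, applied to the canonical (union-of-intervals) representatives furnished by Proposition~\ref{prop:P1D}, which meet the openness hypothesis after a $\mu$-negligible modification. For the converse $\inf\mu^+\le\inf\PP$, fix a Borel set $E\subset[0,D]$ with $\mu(E)=v$ and $\PP(E)=p<\infty$; the task is to produce, for each $\eta>0$, a Borel set $F$ with $\mu(F)=v$ and $\mu^+(F)\le p+\eta$. By Proposition~\ref{prop:P1D}, up to a $\mu$-null set $E=\bigcup_{i\in\N}[a_i,b_i]$ with the closed intervals pairwise disjoint, and $p=\sum_i\bigl(h_\mu(a_i)+h_\mu(b_i)\bigr)$. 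The plan is to truncate the union at a level $N$ and restore the measure deficit $v-\mu\bigl(\bigcup_{i\le N}[a_i,b_i]\bigr)$ by attaching one or two auxiliary intervals adjacent to the endpoints $0,D$ of $[0,D]$, producing a finite disjoint union of closed intervals $F$. For such an $F$, continuity of $h_\mu$ and disjointness of the $\ve$-enlargements (for $\ve$ small) yield the explicit identity
\[
\mu^+(F)\;=\;\sum_{i\le N}\bigl(h_\mu(a_i)+h_\mu(b_i)\bigr)+h_\mu(\alpha_N)+h_\mu(D-\beta_N),
\]
where $\alpha_N,D-\beta_N$ are the free (interior) endpoints of the auxiliary intervals; the endpoints at $0$ and $D$ contribute nothing since the $\ve$-neighborhood is truncated by $[0,D]$.

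The error $\mu^+(F)-p$ then splits into the tail of the series and $h_\mu(\alpha_N)+h_\mu(D-\beta_N)$. The first is rendered $<\eta/2$ by choosing $N$ large enough. For the second, the crucial structural input is the following: since $\sum_i h_\mu(a_i)<\infty$ and $h_\mu$ is continuous on $[0,D]$, every accumulation point $x^\ast$ of the sequence of endpoints $\{a_i,b_i\}_i$ must satisfy $h_\mu(x^\ast)=0$; and since the curvature inequality \eqref{E:curvdensmm} forces $h_\mu>0$ on the interior of $\supp(\mu)$, such points can only lie in $\{0,D\}$. Hence, after a suitable re-indexing, the tail $\bigcup_{i>N}[a_i,b_i]$ lies inside $[0,\rho_N]\cup[D-\rho_N,D]$ with $\rho_N\downarrow 0$; choosing the auxiliary intervals in the corresponding end-regions (using absolute continuity of $\mu$ to match the measure) forces $\alpha_N\downarrow 0$ and $D-\beta_N\uparrow D$, so by continuity $h_\mu(\alpha_N)+h_\mu(D-\beta_N)\to 0$. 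The main obstacle of the argument is precisely this final step---simultaneously matching the measure deficit and ensuring that the free endpoints of the auxiliary intervals contribute negligibly to $\mu^+$---which crucially leverages the boundary-vanishing of $h_\mu$ at the accumulation endpoints, itself a consequence of the summability of the perimeter representation from Proposition~\ref{prop:P1D}.
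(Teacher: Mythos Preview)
Your argument follows the same route as the paper's: represent $E$ via Proposition~\ref{prop:P1D}, reduce to finite unions of intervals on which $\PP=\mu^+$, and invoke \cite[Theorem~6.3]{CM1} for the model bound. The one difference is that the paper simply asserts the reduction to finite unions (displayed there as \eqref{PPEn}) and moves on, whereas your truncation-plus-auxiliary-interval construction---driven by the observation that the endpoints $\{a_i,b_i\}$ can only accumulate where $h_\mu$ vanishes, hence at $\{0,D\}$ by \eqref{E:curvdensmm}---is precisely what justifies \eqref{PPEn}. In effect you supply the detail the paper leaves implicit.

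One small imprecision you share with the paper: for the direction $\inf\PP\le\inf\mu^+$ you invoke Proposition~\ref{prop:P1D} to produce an open representative, but that proposition presupposes finite \emph{perimeter}, while the competitor set a priori only has finite Minkowski content. Since only the reverse inequality $\inf\mu^+\le\inf\PP$ is used downstream in Theorem~\ref{T:iso}, this does no harm.
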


\begin{proof}
By Proposition \ref{prop:P1D}, for each $E \subset [0,D]$ of finite perimeter there exists $F = \cup_{i \in \N} [a_{i},b_{i}]$ such that $\mu(E \Delta F) = 0$ 
and 
$$
\PP_{([0,D], |\cdot|, \mu)}(E)  = \PP_{([0,D], |\cdot|, \mu)}(F) = \sum_{i\in \N} h_{\mu}(a_{i}) + h_{\mu}(b_{i}).
$$
We consider  for each $n \in \N$ the family $\mathcal{E}_{n}$ of sets $E$ of finite perimeter admitting a representative 
$F$ made of at most $n$ disjoint closed intervals. Since
\begin{align}
\inf \{ \PP_{([0,D], |\cdot|, \mu)} (E)\, &~:\, E\subset [0,D], \;  \mu(E)=v\}  \nonumber \\
&~= \inf_{n\in \N} \inf_{E \in \mathcal{E}_{n}} \{ \PP_{([0,D], |\cdot|, \mu)} (E)\, :\, E\subset [0,D], \;  \mu(E)=v \}, \label{PPEn}
\end{align}
and  on finite unions of closed intervals the Minkowski content and the perimeter coincide, it follows that 
$$
\PP_{([0,D], |\cdot|, \mu)} (E)  \geq    \inf \{ \mu^+(B)\, :\, B\subset [0,D], \;  \mu(B)=v\}.
$$
To obtain the reverse inequality, just observe that for 
each set $E$ of finite perimeter, by removing the boundary points to the intervals $[a_i,b_i]$, we can also take $F$ open. 
Then from Proposition \ref{prop:P<m+} it follows that 
$$
\PP_{([0,D], |\cdot|, \mu)} (E) = \PP_{([0,D], |\cdot|, \mu)} (F) \leq \mm^{+}(F)  \leq \mm^{+}(E). 
$$
Taking the inf, equality in \eqref{E:perimeter-I} follows. To prove the inequality in \eqref{E:perimeter-I}, just recall that it is one of 
the main results of \cite{Mil}.
\end{proof}

\medskip
\section{Proof of the main results} \label{S:main}

%

\begin{proof}[Proof of Theorem \ref{T:iso}] 

First of all we can assume $D<\infty$ and therefore $\mm \in \mathcal{P}_{2}(X)$: indeed from the Bonnet-Myers Theorem if $K>0$ then $D<\infty$, 
and if $K\leq 0$ and $D=\infty$ then the model isoperimetric profile trivializes, i.e. $\cI_{K,N,\infty}\equiv 0$ for $K\leq 0$.  
Also, for $v=0,1$ one has $\cI_{K,N,D}(0)=\cI_{K,N,D}(1)=0$, so again there is nothing to prove.  
Therefore, without loss of generality we can assume $v=\mm(E)\in (0,1)$.

Let $\{u_n\}_{n \in \N} \subset \Lip(X)$ be such that
\begin{equation}\label{eq:defun}
\PP(E)=\lim_{n\to \infty} \int_X |\nabla u_n| \, \mm, \quad u_n \to \chi_E \text{ in } L^1(X,\mm).
\end{equation}

Consider the $\mm$-measurable function $f(x) : = \chi_{E}(x)  - v$ and notice that  $\int_{X} f \, \mm = 0$. 
Thus $f$ verifies the  hypothesis of Theorem \ref{T:localize} and noticing that $f$ is never null, 
we can decompose $X = Y \cup \mathcal{T}$ with 
$$
\mm(Y)=0, \qquad   \mm\llcorner_{\mathcal{T}} = \int_{Q} \mm_{q}\, \qq(dq), 
$$
with $\mm_{q} = g(q,\cdot)_\sharp \left( h_{q} \cdot \mathcal{L}^{1}\right)$; 
moreover,  for $\qq$-a.e. $q \in Q$,  the density $h_{q}$ verifies \eqref{E:curvdensmm}  and 
$$
\int_{X} f(z) \, \mm_{q}(dz) =  \int_{\dom(g(q,\cdot))} f(g(q,t)) \cdot h_{q}(t) \, \mathcal{L}^{1}(dt) = 0.
$$
Therefore 
\begin{equation}\label{eq:volhq}
v=\mm_{q} ( E \cap \{ g(q,t) : t\in \R \} ) = (h_{q}\mathcal{L}^1) (g(q,\cdot)^{-1}(E)), \quad \text{ for $\qq$-a.e. $q \in Q$}. 
\end{equation}
\noindent
Observing that the map $\dom(g(q,\cdot)\ni t \mapsto u_n \circ g(q,t) \in \R$ is Lipschitz and therefore differentiable $\mathcal{L}^1$-a.e., we get that   $|\nabla u_n| (g(q,t))\geq  \left| \frac{d}{dt} (u_n \circ g(q,t)) \right|$ for $\mathcal{L}^1$-a.e. $t\in \dom(g(q,\cdot)$.
This implies
\begin{align*}
\int_X |\nabla u_n|(x) \, \mm(dx)   	&~  =    \int_{\mathcal{T}}  |\nabla u_n|(x) \, \mm(dx)  \,\mm(dx)   \crcr
                                                          &~  =    \int_{Q} \left( \int_{X}    |\nabla u_n|(x) \, \mm_{q} (dx) \right)\, \qq(dq) \crcr
						        &~  \geq     \int_{Q} \left( \int_{\dom(g(q,\cdot))}   \left| \frac{d}{dt} (u_n \circ g(q,t)) \right|  \,h_{q}(t) \, \mathcal{L}^{1}(dt) \right)\, \qq(dq) .
\end{align*}
Now we note that for $\qq$-a.e. $q \in Q$, it holds $u_n \circ g(q,\cdot) \to \chi_{g(q,\cdot)^{-1}(E)}$ in $L^1_{loc} (\dom(g(q,\cdot)),  h_{q}\, \mathcal{L}^{1})$, therefore passing to the limit as $n \to \infty$ in the last inequality, using Fatou's Lemma and  the definition of perimeter we get 
\begin{align*}
\PP(E) &~ =\lim_{n\to \infty}  \int_X |\nabla u_n|(x) \, \mm(dx)    \crcr
           &~  \geq     \int_{Q} \left( \int_{\dom(g(q,\cdot))}   \liminf_{n\to \infty}  \left| \frac{d}{dt} (u_n \circ g(q,t)) \right|  \,h_{q}(t) \, \mathcal{L}^{1}(dt) \right)\, \qq(dq) \crcr
           &~  \geq     \int_{Q} \left( \int_{\dom(g(q,\cdot))}   \cP_{(\dom(g(q,\cdot), |\cdot|, h_{q}\, \mathcal{L}^{1})  }  (g(q,\cdot)^{-1}(E) )  \right)\, \qq(dq) .
\end{align*}
But by construction $(h_{q}\, \mathcal{L}^{1})(g(q,\cdot)^{-1}(E) )=v$ for $\qq$-a.e. $q \in Q$,  therefore thanks to Corollary \ref{C:IKND} 
we infer that
$\PP_{(\dom(g(q,\cdot), |\cdot|, h_{q}\, \mathcal{L}^{1})  }  (g(q,\cdot)^{-1}(E) ) \geq \cI_{K,N,D}(v)$. 
We conclude that
$$
\PP(E) \geq   \int_{Q}   \cI_{K,N,D}(v)  \qq(dq)=  \cI_{K,N,D}(v), 
$$
since $\qq(Q)=1$.
%
%
%
\end{proof}

\subsection{The Cheeger constant}
Recall that the Cheeger constant $h_{(X,\sfd,\mm)}$ is defined by
$$
h_{(X,\sfd,\mm)}:=\inf\left\{\frac{\PP(E)}{\mm(E)} \; : \; E\subset X \text{ is Borel  and } \mm(E)\in (0, 1/2]   \right\}.
$$
In analogy with the model isoperimetric profile, we can also define a model Cheeger constant as follows. The model Cheeger constant for spaces having Ricci curvature bounded below by $K\in \R$, dimension bounded above by $N\geq 1$ and diameter at most $D\in (0,\infty]$ is defined by
\begin{equation}\label{eq:defhKND}
h_{K,N,D}:=\inf_{H\in \R,a\in [0,D]} h_{\left([-a,D-a], J_{H,K,N}\right)},
\end{equation}
where $J_{H,K,N}$ was defined in \cite{Mil} (see also \cite{CM1}), see also thereafter for a more explicit form.

In \cite[Section 5]{CM2}, the authors studied the variant of the Cheeger constant when the perimeter is replaced by the outer Minkowski content and used the results of \cite{CM1} to infer sharp comparison and almost rigidity. Since after this short note  we have at disposal the same results expressed in terms of the perimeter (which we remark are a priori stronger), we can repeat verbatim the proofs of \cite[Theorem 5.3, Corollary 5.3]{CM2} just replacing the outer Minkowski content by the perimeter and obtain the following results.

\begin{theorem}\label{T:cheeger}
Let $(X,\sfd,\mm)$ be an  essentially non-branching $\CD_{loc}(K,N)$-space  for some $K\in \R, N \in [1,\infty)$,  with  $\mm(X)=1$ 
and  having diameter  $D\in (0,+\infty]$. Then
\begin{equation}\label{eq:hcomp}
h_{(X,\sfd,\mm)}\geq h_{K,N,D}.
\end{equation}  
Moreover, for $K>0$ the following holds: for every $N>1$ and $\ve>0$ there exists $\bar{\delta}=\bar{\delta}(K,N,\ve)$ such that,  
for every $\delta\in [0,\bar{\delta}]$, if $(X,\sfd,\mm)$ is an essentially non-branching $\CD_{loc}(K-\delta,N+\delta)$-space such that 
\begin{equation}\label{eq:hdelta}
h_{(X,\sfd,\mm)}\leq h_{K,N,\pi\sqrt{(N-1)/K}}+\delta\quad (=h(S^N(\sqrt{(N-1)/K)})+\delta \text{  if $N\in {\mathbb N}$}), 
\end{equation}
then  ${\diam}(X)\geq \pi\sqrt{(N-1)/K}-\ve$.
\end{theorem}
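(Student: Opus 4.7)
The plan is to transport the argument of [Theorem~5.3, Corollary~5.3 of CM2] verbatim, substituting Theorem~\ref{T:iso} for the Minkowski-content isoperimetric inequality used there; this substitution is admissible precisely because $\PP(E)\leq \mm^+(E)$, so the perimeter version is strictly stronger. For the comparison bound \eqref{eq:hcomp}, given any Borel $E$ with $v:=\mm(E)\in (0,1/2]$, Theorem~\ref{T:iso} yields
$$
\frac{\PP(E)}{\mm(E)} \;\geq\; \frac{\cI_{K,N,D}(v)}{v},
$$
so $h_{(X,\sfd,\mm)}\geq \inf_{v\in (0,1/2]}\cI_{K,N,D}(v)/v$. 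The remaining identification $\inf_v \cI_{K,N,D}(v)/v = h_{K,N,D}$ is the one-dimensional fact that, for each model density $J_{H,K,N}$ on $[-a,D-a]$, the Cheeger constant equals the infimum of the isoperimetric ratio; taking the double infimum over $(H,a)\in \R\times [0,D]$ present in the definitions of both $\cI_{K,N,D}$ and $h_{K,N,D}$ yields the equality.

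For the almost-rigidity statement I argue by contradiction and compactness. Write $\bar D:=\pi\sqrt{(N-1)/K}$ and suppose there exist $\ve>0$, a sequence $\delta_n\downarrow 0$ and essentially non-branching $\CD_{loc}(K-\delta_n, N+\delta_n)$ spaces $(X_n,\sfd_n,\mm_n)$ with $\mm_n(X_n)=1$, satisfying $h_{(X_n,\sfd_n,\mm_n)}\leq h_{K,N,\bar D}+\delta_n$ but $\diam(X_n)\leq \bar D-\ve$. Bonnet-Myers and Gromov precompactness, applied inside the stable class $\CD^*$ (which for $N>1$ coincides with $\CD_{loc}$), produce a subsequence converging in mGH sense to some $(X_\infty,\sfd_\infty,\mm_\infty)$ with $D_\infty:=\diam(X_\infty)\leq \bar D-\ve$. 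Selecting quasi-minimizers $E_n\subset X_n$ with $\mm_n(E_n)\in (0,1/2]$ realizing the Cheeger ratio up to error $\delta_n$, the lower semicontinuity of $\PP$ under $L^1$-convergence of characteristic functions together with convergence of the measures yields a limit set $E_\infty\subset X_\infty$ with
$$
\frac{\PP(E_\infty)}{\mm_\infty(E_\infty)}\;\leq\; \liminf_n \frac{\PP(E_n)}{\mm_n(E_n)}\;\leq\; h_{K,N,\bar D},
$$
hence $h_{(X_\infty,\sfd_\infty,\mm_\infty)}\leq h_{K,N,\bar D}$. On the other hand, Step~1 applied to the limit gives $h_{(X_\infty,\sfd_\infty,\mm_\infty)}\geq h_{K,N,D_\infty}$, so $h_{K,N,D_\infty}\leq h_{K,N,\bar D}$, contradicting the strict monotonicity $h_{K,N,D}>h_{K,N,\bar D}$ for $D<\bar D$ that follows from the explicit one-dimensional analysis of the Milman models, already exploited in CM2.

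The hard part is the compactness step, for two reasons. First, the essentially non-branching $\CD_{loc}$ hypothesis is not known to be mGH-stable in general, so one must reformulate the limiting procedure in the stable class $\CD^*$ (equivalent to $\CD_{loc}$ in the relevant regime $K>0$, $N>1$) in order to invoke Theorem~\ref{T:iso} at $X_\infty$. Second, one must ensure that the limit set $E_\infty$ is non-trivial, i.e.\ $\mm_\infty(E_\infty)\in (0,1/2]$; this follows from a uniform positive lower bound on $\mm_n(E_n)$ dictated by the isoperimetric inequality $\PP(E_n)\geq \cI_{K,N,D_n}(\mm_n(E_n))$ together with the strict positivity of $\cI_{K,N,\bar D}$ on $(0,1/2]$ for $K>0$. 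Both points are handled exactly as in CM2, and replacing $\mm^+$ by $\PP$ throughout preserves the argument.
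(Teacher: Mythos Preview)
Your treatment of the comparison bound \eqref{eq:hcomp} is correct and matches the paper: apply Theorem~\ref{T:iso} to each competitor and reduce to the one-dimensional identification of $h_{K,N,D}$ with $\inf_{v\in(0,1/2]}\cI_{K,N,D}(v)/v$.

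For the almost-maximal-diameter part, however, you take a different and unnecessarily heavy route. The argument in CM2 (which the paper invokes verbatim) does \emph{not} pass to an mGH limit of the spaces, nor does it extract limiting sets. It simply applies the comparison bound \eqref{eq:hcomp} on each $(X_n,\sfd_n,\mm_n)$ itself, obtaining
\[
h_{K,N,\bar D}+\delta_n \;\geq\; h_{(X_n,\sfd_n,\mm_n)}\;\geq\; h_{K-\delta_n,\,N+\delta_n,\,D_n}\;\geq\; h_{K-\delta_n,\,N+\delta_n,\,\bar D-\ve},
\]
the last inequality by monotonicity of $D\mapsto h_{K,N,D}$. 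Letting $n\to\infty$ and using continuity of $(K,N)\mapsto h_{K,N,D}$ together with the strict inequality $h_{K,N,\bar D-\ve}>h_{K,N,\bar D}$ (all established for the model profiles in CM2 from Milman's one-dimensional analysis) yields the contradiction. No compactness of spaces or of sets is needed.

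Your compactness approach, by contrast, has two genuine gaps. First, the mGH limit $X_\infty$ is $\CD^*(K,N)$ but is \emph{not} known to be essentially non-branching, so you cannot invoke Theorem~\ref{T:iso} on $X_\infty$; your remark about ``reformulating in the stable class $\CD^*$'' does not fix this, since essential non-branching is part of the hypothesis of Theorem~\ref{T:iso}. Second, the ``lower semicontinuity of $\PP$ under $L^1$-convergence together with convergence of the measures'' you appeal to concerns sets living in \emph{varying} spaces; this is a delicate statement (of Ambrosio--Honda type) well beyond the $L^1$-l.s.c.\ in a fixed space recorded in the preliminaries, and is neither proved nor cited here. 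Both difficulties evaporate once you argue directly on the $X_n$ as above.
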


Before stating the last  result let us observe that if $(X,\sfd,\mm)$ is an $\RCD^*(K,N)$ space for some $K>0$ then, 
called $\sfd':=\sqrt{\frac{K}{N-1}} \; \sfd$, we have that $(X,\sfd',\mm)$ is  $\RCD^*(N-1,N)$; 
in other words, if the Ricci lower bound is $K>0$ then up to scaling we can assume it is actually equal to $N-1$.

\begin{corollary}\label{C:almostcheeger}
For every $N\in [2, \infty) $,  $\ve>0$ there exists $\bar{\delta}=\bar{\delta}(N,\ve)>0$ such that the following hold. 
For every  $\delta \in [0, \bar{\delta}]$, if  $(X,\sfd,\mm)$ is an $\RCD^*(N-1-\delta,N+\delta)$-space with $\mm(X)=1$, satisfying 
$$
h_{(X,\sfd,\mm)}\leq h_{N-1,N,\pi}+\delta \quad (=h(S^N)+\delta \text{  if $N\in {\mathbb N}$}), 
$$
then  there exists an $\RCD^*(N-2,N-1)$ space $(Y, \sfd_Y, \mm_Y)$ with $\mm_Y(Y)=1$ such that 
$$
\sfd_{mGH}(X, [0,\pi] \times_{\sin}^{N-1} Y) \leq \ve. 
$$
In particular, if $(X,\sfd,\mm)$ is an $\RCD^*(N-1,N)$-space satisfying $h_{(X,\sfd,\mm)}= h_{N-1,N,\pi}(=h(S^N)$ if $N\in \N)$, 
then it is isomorphic to a spherical suspension; i.e. there exists an $\RCD^*(N-2,N-1)$ space $(Y, \sfd_Y, \mm_Y)$ 
with $\mm_Y(Y)=1$ such that $(X,\sfd,\mm)$ is isomorphic to $[0,\pi] \times_{\sin}^{N-1} Y$.
\end{corollary}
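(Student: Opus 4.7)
My plan is to deduce Corollary \ref{C:almostcheeger} from Theorem \ref{T:cheeger} by a standard contradiction-and-compactness argument, together with the Maximal Diameter Theorem of Ketterer \cite{Ket} and the stability/precompactness of the class of $\RCD^*(K,N)$ spaces under measured Gromov-Hausdorff convergence. Fix $N\in[2,\infty)$ and $\ve>0$ and suppose the almost-rigidity statement fails. Then there exist $\delta_n\downarrow 0$ and $\RCD^*(N-1-\delta_n,N+\delta_n)$ spaces $(X_n,\sfd_n,\mm_n)$ with $\mm_n(X_n)=1$ and $h_{(X_n,\sfd_n,\mm_n)}\leq h_{N-1,N,\pi}+\delta_n$, such that no $\RCD^*(N-2,N-1)$ probability space $Y$ lies within $\ve$ of $X_n$ in $\sfd_{mGH}$-distance.

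Pick an auxiliary sequence $\ve_n\downarrow 0$ with $\delta_n\leq \bar\delta(N-1,N,\ve_n)$ (possible since the threshold in Theorem \ref{T:cheeger} is strictly positive for each fixed $\ve_n$). Applying Theorem \ref{T:cheeger} with $K=N-1$ to $X_n$, the Cheeger bound forces $\diam(X_n)\geq \pi-\ve_n$, hence $\diam(X_n)\to\pi$. Since the sequence $(X_n)$ satisfies a uniform Ricci lower bound $N-1-\delta_n$, a uniform upper dimension bound $N+\delta_n$, and a uniformly bounded diameter (e.g.\ from the generalized Bonnet-Myers theorem), the Gromov-type precompactness for $\RCD^*$ spaces yields, up to a subsequence, an mGH-limit $(X_\infty,\sfd_\infty,\mm_\infty)$. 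The stability of the $\RCD^*$ condition under mGH convergence identifies this limit as an $\RCD^*(N-1,N)$ probability space, and the continuity of the diameter under mGH convergence gives $\diam(X_\infty)=\pi$.

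Ketterer's Maximal Diameter Theorem \cite{Ket} now implies that $(X_\infty,\sfd_\infty,\mm_\infty)$ is isomorphic, as a metric measure space, to a spherical suspension $[0,\pi]\times_{\sin}^{N-1} Y_\infty$ for some $\RCD^*(N-2,N-1)$ probability space $(Y_\infty,\sfd_{Y_\infty},\mm_{Y_\infty})$. For $n$ large the mGH distance from $X_n$ to this explicit spherical suspension is then strictly smaller than $\ve$, contradicting the standing assumption and proving the almost-rigidity. For the \emph{in particular} statement, if $(X,\sfd,\mm)$ is an $\RCD^*(N-1,N)$ space with $h_{(X,\sfd,\mm)}=h_{N-1,N,\pi}$, one applies Theorem \ref{T:cheeger} with $\delta=0$ for every $\ve>0$ to obtain $\diam(X)\geq \pi-\ve$; combined with the Bonnet-Myers upper bound $\diam(X)\leq \pi$, this forces $\diam(X)=\pi$, so a direct appeal to Ketterer's theorem identifies $X$ with a spherical suspension. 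The only delicate step is the verification of mGH-precompactness and $\RCD^*$-stability at the perturbed parameters $(N-1-\delta_n,N+\delta_n)$; this is by now standard in the theory (cf.\ the discussion in \cite[Section 6.4]{CM1}), and once it is granted the argument is formal.
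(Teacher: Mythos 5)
Your argument is correct and is essentially the proof the paper intends: the paper simply invokes the verbatim argument of \cite[Theorem 5.3, Corollary 5.3]{CM2}, which is exactly your contradiction--compactness scheme (almost-maximal diameter from Theorem \ref{T:cheeger}, mGH precompactness and $\RCD^*$ stability, then Ketterer's Maximal Diameter Theorem, with the $\delta=0$ case giving the rigidity statement). The only cosmetic point is the selection of $\ve_n\downarrow 0$ with $\delta_n\leq\bar\delta(N-1,N,\ve_n)$, which needs the standard diagonal choice using $\delta_n\to 0$ rather than mere positivity of $\bar\delta$, but this does not affect the conclusion $\diam(X_n)\to\pi$.
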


\end{document}